\def\@seccntformat#1{\csname the#1\endcsname\quad}
\begin{document}
\setlength{\baselineskip}{1\baselineskip}

\title{\Huge{On $\Gr{H}$-Intersecting Graph Families\\ and Counting of Homomorphisms}}

\author{Igal Sason
\thanks{
Igal Sason is with the Viterbi Faculty of Electrical and Computer Engineering and also the Department of Mathematics
(a secondary affiliation) at the Technion --- Israel Institute of Technology, Haifa 3200003, Israel. Email: eeigal@technion.ac.il. \\[0.1cm]
{\bf{Citation}}: I. Sason, ``On H-intersecting graph families and counting of homomorphisms,''
{\em AIMS Mathematics}, vol.~10, no.~3, paper~290, pp.~6355--6378, March 2025. \url{https://doi.org/10.3934/math.2025290} \\[0.1cm]
This postprint incorporates the discussion in Remark~\ref{remark: Comparison to Sidorenko's LB}, along with several additional related references.}}

\maketitle

\thispagestyle{empty}
\setcounter{page}{1}

{\footnotesize \noindent {\bf Abstract.}
This work derives an upper bound on the maximum cardinality of a family of graphs on a fixed number of vertices, in which
the intersection of every two graphs in that family contains a subgraph that is isomorphic to a specified graph $\Gr{H}$.
Such families are referred to as $\Gr{H}$-intersecting graph families. The bound is derived using the combinatorial version
of Shearer's lemma, and it forms a nontrivial extension of the bound derived by Chung, Graham, Frankl, and Shearer (1986),
where $\Gr{H}$ is specialized to a triangle. The derived bound is expressed in terms of the chromatic number of $\Gr{H}$,
while a relaxed version, formulated using the Lov\'{a}sz $\vartheta$-function of the complement of $\Gr{H}$, offers reduced
computational complexity. Additionally, a probabilistic version of Shearer’s lemma, combined with properties of Shannon
entropy, are employed to establish bounds related to the enumeration of graph homomorphisms, providing further insights into
the interplay between combinatorial structures and information-theoretic principles.

\vspace*{0.1cm}
\noindent {\bf Keywords.} Entropy, counting problems, extremal graph theory, intersecting families of graphs, graph homomorphisms,
Shearer's lemma, chromatic number, Lov\'{a}sz $\vartheta$-function.

\noindent {\bf 2020 Mathematics Subject Classification (MSC).} 05C30, 05C60, 05C80, 94A15.}

\section{Introduction}
\label{section: introduction}

An $\Gr{H}$-intersecting family of graphs is a collection of finite, undirected, and simple graphs (i.e., graphs with no self-loops or
parallel edges) on a fixed number of vertices, in which the intersection of every two graphs in the family contains a subgraph isomorphic to
$\Gr{H}$. For instance, if $\Gr{H}$ is an edge or a triangle, then every pair of graphs in the family shares at least one edge or triangle,
respectively. These intersecting families of graphs play a central role in extremal graph theory, where determining their maximum possible
size remains a longstanding challenge. Different choices of $\Gr{H}$ lead to distinct combinatorial problems and structural constraints.

A pivotal conjecture, proposed in 1976 by Simonovits and S\'{o}s, concerned the maximum size of triangle-intersecting graph families---those
in which the intersection of any two graphs contains a triangle. They conjectured that the largest size of such a family
is obtained by the family of all graphs on $n$ vertices that contain a fixed triangle, leading to the conjectured largest size
of $2^{\binom{n}{2}-3}$. Furthermore, a trivial upper bound on the size of a triangle-intersecting graph family on $n$ vertices
is~4 times larger than this conjectured value. This holds since a graph and its complement cannot both belong to an edge-intersecting
family, let alone a triangle-intersecting family.
The foundational work in \cite{SimonovitsS76,SimonovitsS78,SimonovitsS80} explores intersection theorems for graph families whose
shared subgraphs are cycles or paths.

The first major progress on the conjecture by Simonovits and S\'{o}s was made in 1986 by Chung, Graham, Frankl, and Shearer \cite{ChungGFS86},
who utilized Shearer’s inequality to establish a non-trivial upper bound on the largest possible cardinality of a family of triangle-intersecting
graphs with a fixed number of vertices. This bound lay between the trivial and conjectured bounds (or, more formally, it
was equal to twice the conjectured bound, which is also the geometric mean of the conjectured and trivial bounds).

The conjecture by Simonovits and S\'{o}s was ultimately resolved in 2012 by Ellis, Filmus, and Friedgut \cite{EllisFF12}, who proved that
the largest triangle-intersecting family comprises all graphs containing a fixed triangle. Building on the Fourier analytic methods
of Boolean functions, used to prove this conjecture in \cite{EllisFF12} (see also Section~4 of \cite{Ellis22}), a recent work by
Berger and Zhao \cite{BergerZ23} extended the investigation to $\CoG{4}$-intersecting graph families, addressing analogous questions
for graph families where every pair of graphs intersects in a complete subgraph of size four.
Additionally, Keller and Lifshitz \cite{KellerL19} constructed, for every graph $\Gr{H}$ and for every $p \in (\tfrac12, 1)$, an
$\Gr{H}$-intersecting family of graphs $\set{G}$ on $n$ vertices such that a random graph $\Gr{G} \sim \Gr{G}(n,p)$ belongs to $\set{G}$
with probability tending to~1 exponentially fast in $n^2$. Here, $\Gr{G}(n,p)$ denotes the (binomial) Erd\v{o}s-R\'{e}nyi random graph,
in which every edge of $\CoG{n}$ (the complete graph on $n$ vertices) is included independently with probability $p$.
These contributions highlight the interplay between combinatorial, probabilistic, and algebraic methods in the analysis of intersecting graph families.

The interplay between Shannon entropy and extremal combinatorics has significantly enhanced the understanding of the structural and
quantitative properties of combinatorial objects through information-theoretic methods. Entropy serves as a versatile and powerful
tool to derive concise, often elegant proofs of classical results in extremal combinatorics (see, e.g., Chapter~37 of \cite{AignerZ18},
Chapter~22 of \cite{Jukna11}, and \cite{Galvin14,Pippenger77,Pippenger99,Radhakrishnan97,Radhakrishnan01}).
Notable examples include Radhakrishnan's entropy-based proof of Bregman's theorem on matrix permanents \cite{Radhakrishnan97}
and the application of Shearer's lemma to upper-bound the size of the largest triangle-intersecting graph families with a fixed
number of vertices \cite{ChungGFS86}. Beyond this specific context, Shearer's inequalities have found extensive applications across
diverse areas, including finite geometry, graph theory, the analysis of Boolean functions, and large deviations
(see \cite{ChungGFS86, Friedgut04, GavinskyLSS14, Kahn01, Kahn02, MadimanT_IT10, Radhakrishnan01, Sason21, Sason22}).
A recent talk by the author addresses Shearer's inequalities and their applications in combinatorics \cite{Sason_HIM2024}.

The first part of this paper relies on the combinatorial version of Shearer's inequalities in \cite{ChungGFS86} to derive a new upper
bound on the cardinality of families of $\Gr{H}$-intersecting graphs with a fixed number of vertices. The bound represents a nontrivial
extension of the bound in \cite{ChungGFS86}, where $\Gr{H}$ is specialized to a triangle.
The derived bound is expressed in terms of the chromatic number of $\Gr{H}$, while a relaxed version, formulated using the Lov\'{a}sz
$\vartheta$-function of the complement of $\Gr{H}$, reduces computational complexity. The relaxed bound is further explored in the
case where $\Gr{H}$ is a regular graph, particularly when it is strongly regular.

Graph homomorphisms serve as a versatile framework to understand graph mappings, which facilitate studies of structural
properties, colorings, and symmetries. The applications of graph homomorphisms span various fields, including statistical physics, where they model
spin systems \cite{BrightwellW99}, and computational complexity, where they underpin constraint satisfaction problems \cite{HellN04}.
Recent research has yielded profound insights into counting graph homomorphisms, a problem with deep theoretical and practical relevance
(see \cite{Borgs06, CsikvarRS22, FriedgutK98, Galvin14, Lovasz12, WangTL23, Sidorenko93, Simonovits84, Zhao23,
ConlonFS10, ConlonFS10b, ConlonKLL18, Szegedy15a, Szegedy15b, SahSSZ20}). The second part of this paper relies on Shearer's inequalities
and properties of Shannon entropy to derive bounds on the number of homomorphisms.

The paper is structured as follows: Section~\ref{section: Preliminaries} presents essential preliminary material,
including three versions of Shearer’s inequalities.
In Section~\ref{section: entropy bounds - Intersecting Families of Graphs}, the combinatorial version of Shearer’s
lemma is employed for upper bounding the size of $\Gr{H}$-intersecting families of graphs.
Section~\ref{section: entropy bounds - Number of Graph Homomorphisms} focuses on entropy-based proofs, also
incorporating a probabilistic version of Shearer’s lemma, to derive bounds on the number of graph homomorphisms.

\section{Preliminaries}
\label{section: Preliminaries}

\subsection{Shearer's inequalities}
\label{subsection: Shearer's inequalities}

The following subsection introduces three versions of Shearer's inequalities that are useful in the analysis presented
in this paper. The first version serves as a foundation for proving the other two, which are directly applied in this
work. Familiarity with Shannon entropy and its basic properties is assumed, following standard notation (see, e.g.,
Chapter~3 of \cite{CoverT06}).

\begin{proposition}[Shearer's Lemma]
\label{proposition: Shearer's Lemma}
{\em Let
\begin{itemize}
\item $n, m, k \in \naturals$,
\item $X_1, \ldots, X_n$ be discrete random variables,
\item $\OneTo{n} \triangleq \{1, \ldots, n\}$,
\item $\set{S}_1, \ldots, \set{S}_m \subseteq \OneTo{n}$ be subsets such that each
$i \in \OneTo{n}$ belongs to at least $k \geq 1$ of these subsets,
\item $X^n \eqdef (X_1, \ldots, X_n)$, and $X_{\set{S}_j} \triangleq (X_i)_{i \in \set{S}_j}$
for all $j \in \OneTo{m}$.
\end{itemize}
Then,
\begin{align}
\label{eq: Shearer's lemma}
k \Ent{X^n} \leq \sum_{j=1}^m \Ent{X_{\set{S}_j}}.
\end{align}}
\end{proposition}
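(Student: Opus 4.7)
The plan is to give a direct, non-inductive proof using the chain rule for Shannon entropy together with the fact that conditioning cannot increase entropy. Fix the canonical ordering $1 < 2 < \cdots < n$ on $\OneTo{n}$ and, for each $i \in \OneTo{n}$, write $X_{[i-1]} \eqdef (X_1, \ldots, X_{i-1})$ (with $X_{[0]}$ being a constant).

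First, I would expand each $\Ent{X_{\set{S}_j}}$ by the chain rule using the order inherited from $\OneTo{n}$:
\begin{align*}
\Ent{X_{\set{S}_j}} = \sum_{i \in \set{S}_j} \Ent{X_i \mid X_{\set{S}_j \cap [i-1]}}.
\end{align*}
Since $\set{S}_j \cap [i-1] \subseteq [i-1]$, the monotonicity of conditional entropy (conditioning reduces entropy) yields
\begin{align*}
\Ent{X_i \mid X_{\set{S}_j \cap [i-1]}} \geq \Ent{X_i \mid X_{[i-1]}}
\end{align*}
for every $i \in \set{S}_j$ and every $j \in \OneTo{m}$.

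Next, I would sum over $j \in \OneTo{m}$ and swap the order of summation, regrouping by the index $i \in \OneTo{n}$. By hypothesis, each $i$ belongs to at least $k$ of the subsets $\set{S}_1, \ldots, \set{S}_m$, and since the lower bound $\Ent{X_i \mid X_{[i-1]}}$ is a non-negative quantity depending only on $i$ (not on $j$), I obtain
\begin{align*}
\sum_{j=1}^m \Ent{X_{\set{S}_j}} \; \geq \; \sum_{j=1}^m \sum_{i \in \set{S}_j} \Ent{X_i \mid X_{[i-1]}} \; \geq \; k \sum_{i=1}^n \Ent{X_i \mid X_{[i-1]}}.
\end{align*}
Finally, the chain rule applied to the joint entropy of $X^n$ identifies the last sum with $\Ent{X^n}$, which produces the claimed inequality $k \, \Ent{X^n} \leq \sum_{j=1}^m \Ent{X_{\set{S}_j}}$.

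The argument is essentially a bookkeeping exercise, so there is no deep obstacle; the only subtle point is choosing a fixed global ordering before invoking the chain rule on each $\set{S}_j$, which is what makes the conditioning-reduces-entropy step align cleanly across all $j$ and allows the "each $i$ is covered at least $k$ times" hypothesis to convert the double sum into a multiple of $\Ent{X^n}$. I would briefly remark that equality holds, for example, when $X_1, \ldots, X_n$ are mutually independent and every $i$ is covered exactly $k$ times.
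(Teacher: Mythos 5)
Your proof is correct and follows essentially the same route as the paper: expand each $\Ent{X_{\set{S}_j}}$ by the chain rule in the fixed ordering of $\OneTo{n}$, lower-bound each term via conditioning reduces entropy, swap the order of summation, and invoke nonnegativity together with the covering hypothesis before closing with a second application of the chain rule. The paper phrases the regrouping through the degree function $d(i)$, but the underlying argument is the same.
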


\begin{proof}
By assumption, $d(i) \geq k$ for all $i \in \OneTo{n}$, where
\begin{align}
\label{eq: d}
d(i) \eqdef \bigcard{\bigl\{j \in \OneTo{m}: \, i \in \set{S}_j \bigr\}}.
\end{align}
Let $\set{S} = \{i_1, \ldots, i_\ell\}$, $1 \leq i_1 < \ldots < i_\ell \leq n$, which implies that
$\card{\set{S}} = \ell, \, \set{S} \subseteq \OneTo{n}$. Further, let $X_{\set{S}} \eqdef (X_{i_1}, \ldots, X_{i_\ell})$.
By the chain rule and the fact that conditioning reduces entropy,
\begin{align}
\Ent{X_{\set{S}}} &= \Ent{X_{i_1}} + \EntCond{X_{i_2}}{X_{i_1}} + \ldots + \EntCond{X_{i_\ell}}{X_{i_1}, \ldots, X_{i_{\ell-1}}} \nonumber \\[0.1cm]
&\geq \underset{i \in \set{S}}{\sum} \EntCond{X_i}{X_1, \ldots, X_{i-1}} \nonumber \\
&= \overset{n}{\underset{i=1}{\sum}} \, \Bigl\{ \I{i \in \set{S}} \, \EntCond{X_i}{X_1, \ldots, X_{i-1}} \Bigr\}, \label{eq10: 22.09.2024}
\end{align}
where $\I{i \in \set{S}}$ on the right-hand side of \eqref{eq10: 22.09.2024} denotes the indicator function of the event $\{i \in \set{S}\}$,
meaning that it is equal to~1 if $i \in \set{S}$ and it is zero otherwise. Consequently, we get
\begin{align}
\sum_{j=1}^m \Ent{X_{\set{S}_j}} &\geq \overset{m}{\underset{j=1}{\sum}} \, \overset{n}{\underset{i=1}{\sum}}
\, \Bigl\{ \I{i \in \set{S}_j} \, \EntCond{X_i}{X_1, \ldots, X_{i-1}} \Bigr\} \label{eq1: 07.04.2025} \\
&= \overset{n}{\underset{i=1}{\sum}} \, \Biggl\{ \overset{m}{\underset{j=1}{\sum}} \, \I{i \in \set{S}_j}
\, \EntCond{X_i}{X_1, \ldots, X_{i-1}} \Biggr\} \label{eq2: 07.04.2025} \\
&= \overset{n}{\underset{i=1}{\sum}} \, \Bigl\{ d(i) \, \EntCond{X_i}{X_1, \ldots, X_{i-1}} \Bigr\} \label{eq3: 07.04.2025} \\
&\geq k \, \overset{n}{\underset{i=1}{\sum}} \, \EntCond{X_i}{X_1, \ldots, X_{i-1}} \label{eq: 22.09.2024} \\[0.1cm]
&=k \, \Ent{X^n},  \label{eq: 03.03.2025}
\end{align}
where inequality \eqref{eq1: 07.04.2025} holds by applying \eqref{eq10: 22.09.2024} to get a lower bound on $\Ent{X_{\set{S}_j}}$
for each $j \in \OneTo{m}$; equality \eqref{eq2: 07.04.2025} holds by swapping the order of summation; equality \eqref{eq3: 07.04.2025}
holds by \eqref{eq: d}; inequality \eqref{eq: 22.09.2024} holds due to the nonnegativity of the conditional entropies of discrete
random variables, and since (by assumption) $d(i) \geq k$ for all $i \in \OneTo{n}$, and equality \eqref{eq: 03.03.2025}
holds by the chain rule of Shannon entropy.
\end{proof}

\begin{remark}
\label{remark: Shearer's Lemma - exactly k elements}
{\em If every element $i \in \OneTo{n}$ belongs to {\em exactly} $k$ of the subsets $\set{S}_1, \ldots, \set{S}_m$,
then Shearer's lemma also applies to continuous random variables $X_1, \ldots, X_n$, with entropy replaced by the
differential entropy. This holds since \eqref{eq: 22.09.2024} is satisfied with equality under the condition
that $d(i)=k$ for all $i \in \OneTo{n}$, independently of the nonnegativity of the conditional entropies
$\EntCond{X_i}{X_1, \ldots, X_{i-1}}$ for each $i \in \OneTo{n}$.}
\end{remark}

\begin{example}[Subadditivity of Shannon Entropy]
\label{example: subadditivity}
{\em Let $X_1, \ldots, X_n$ be discrete random variables, $m=n$, and $\set{S}_i = \{i\}$ (singletons) for all $i \in \OneTo{n}$.
Every element $i \in \OneTo{n}$ then belongs to a single set among $\set{S}_1, \ldots, \set{S}_n$
(i.e., $k=1$ in Proposition~\ref{proposition: Shearer's Lemma}). By Shearer's lemma,
\begin{align}
\label{eq: subadditivity}
\Ent{X^n} \leq \sum_{j=1}^n \Ent{X_j},
\end{align}
which expresses the subadditivity property of Shannon entropy for discrete random variables. This
also holds for continuous random variables, where the entropy is replaced by differential entropy
since every element $i \in \OneTo{n}$ is contained in exactly one subset (see Remark~\ref{remark: Shearer's Lemma - exactly k elements}).
Consequently, Shearer's lemma yields the subadditivity property of Shannon entropy for discrete and continuous random variables.}
\end{example}

\begin{example}[Han's Inequality, \cite{Han78}]
{\em Let $X_1, \ldots, X_n$ be discrete random variables.
For every $\ell \in \OneTo{n}$, let $\set{S}_\ell = \OneTo{n} \setminus \{\ell\}$. By Shearer's lemma
(Proposition~\ref{proposition: Shearer's Lemma}) applied to these $n$ subsets of $\OneTo{n}$, since
every element $i \in \OneTo{n}$ is contained in exactly $k=n-1$ of these subsets,
\begin{align}
\label{eq1: Han's inequality}
(n-1) \Ent{X^n} \leq \sum_{\ell=1}^n \Ent{X_1, \ldots, X_{\ell-1}, X_{\ell+1}, \ldots, X_n} \leq n \Ent{X^n}.
\end{align}
An equivalent form of \eqref{eq1: Han's inequality} is given by
\begin{align}
\label{eq2: Han's inequality}
0 \leq \sum_{\ell=1}^n \Bigl\{ \Ent{X^n} - \Ent{X_1, \ldots, X_{\ell-1}, X_{\ell+1}, \ldots , X_n} \Bigr\} \leq \Ent{X^n}.
\end{align}
The equivalent forms in \eqref{eq1: Han's inequality} and \eqref{eq2: Han's inequality} are known as Han's inequality.
Note that, by Remark~\ref{remark: Shearer's Lemma - exactly k elements}, the left-hand side inequality of
\eqref{eq1: Han's inequality} and, equivalently, the right-hand side inequality of \eqref{eq2: Han's inequality} remain
valid for continuous random variables as well.}
\end{example}

In the combinatorial version of Shearer's lemma \cite{ChungGFS86}, next given, the concept of entropy is hidden.
\begin{proposition}[Combinatorial Version of Shearer's Lemma]
\label{proposition: Combinatorial Shearer's Lemma}
{\em Consider the following setting:
\begin{itemize}
\item Let $\mathscr{F}$ be a finite multiset of subsets of $\OneTo{n}$ (allowing repetitions of some subsets),
where each element $i \in \OneTo{n}$ is included in at least $k \geq 1$ sets of $\mathscr{F}$.
\item Let $\mathscr{M}$ be a set of subsets of $\OneTo{n}$.
\item For every set $\set{S} \in \mathscr{F}$, let the trace of $\mathscr{M}$ on $\set{S}$, denoted by
$\text{trace}_{\set{S}}(\mathscr{M})$, be the set of all possible intersections of elements of $\mathscr{M}$
with $\set{S}$, i.e.,
\begin{align}
\label{eq1: 01.12.2024}
\text{trace}_{\set{S}}(\mathscr{M}) \triangleq \bigl\{ \set{A} \cap \set{S}: \, \set{A} \in \mathscr{M} \bigr\}, \quad \forall \, \set{S} \in \mathscr{F}.
\end{align}
\end{itemize}
Then,
\begin{align}
\label{eq2: 01.12.2024}
\card{\mathscr{M}} \leq \prod_{\set{S} \in \mathscr{F}} \bigl| \text{trace}_{\set{S}}(\mathscr{M}) \bigr|^{\frac1k}.
\end{align}}
\end{proposition}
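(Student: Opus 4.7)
The plan is to reduce the combinatorial statement to the entropy version of Shearer's lemma (Proposition~\ref{proposition: Shearer's Lemma}) by encoding elements of $\mathscr{M}$ as indicator vectors and choosing one of them uniformly at random. Concretely, I would let $\set{A}$ be drawn uniformly from $\mathscr{M}$ and define the $\{0,1\}$-valued random variables $X_i \eqdef \I{i \in \set{A}}$ for $i \in \OneTo{n}$. The map sending $\set{A} \mapsto (X_1, \ldots, X_n)$ is a bijection between $\mathscr{M}$ and its image in $\{0,1\}^n$, so $X^n = (X_1, \ldots, X_n)$ is uniformly distributed on a set of cardinality $\card{\mathscr{M}}$, which gives $\Ent{X^n} = \log \card{\mathscr{M}}$.

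Next, for each $\set{S} \in \mathscr{F}$ I would bound the entropy of the marginal $X_{\set{S}} = (X_i)_{i \in \set{S}}$ by the logarithm of its support size. Under the indicator encoding, specifying $X_{\set{S}}$ is equivalent to specifying the intersection $\set{A} \cap \set{S}$, so the image of the map $\set{A} \mapsto X_{\set{S}}$ is exactly $\text{trace}_{\set{S}}(\mathscr{M})$ as defined in \eqref{eq1: 01.12.2024}. Therefore $\Ent{X_{\set{S}}} \leq \log \bigcard{\text{trace}_{\set{S}}(\mathscr{M})}$.

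Finally, I would index the multiset $\mathscr{F}$ as $\set{S}_1, \ldots, \set{S}_m$, with each occurrence counted separately, so that the hypothesis that every $i \in \OneTo{n}$ belongs to at least $k$ members of $\mathscr{F}$ is exactly the hypothesis of Proposition~\ref{proposition: Shearer's Lemma}. Applying \eqref{eq: Shearer's lemma} and substituting the two estimates from the previous paragraphs yields
\begin{align*}
k \log \card{\mathscr{M}} \;=\; k \, \Ent{X^n} \;\leq\; \sum_{j=1}^m \Ent{X_{\set{S}_j}} \;\leq\; \sum_{\set{S} \in \mathscr{F}} \log \bigcard{\text{trace}_{\set{S}}(\mathscr{M})},
\end{align*}
and exponentiating produces \eqref{eq2: 01.12.2024}. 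I do not foresee a substantive obstacle; the only bookkeeping subtlety is to handle the multiset structure of $\mathscr{F}$ so that repetitions are counted with the correct multiplicity both in the degree condition $d(i) \geq k$ and in the final product.
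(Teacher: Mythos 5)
Your proposal is correct and matches the paper's proof essentially verbatim: encode a uniformly random element of $\mathscr{M}$ as a $\{0,1\}$-indicator vector, note that $\Ent{X^n}=\log\card{\mathscr{M}}$, bound each $\Ent{X_{\set{S}}}$ by $\log\bigl|\text{trace}_{\set{S}}(\mathscr{M})\bigr|$ via the maximum-entropy bound, and invoke Proposition~\ref{proposition: Shearer's Lemma}. The bookkeeping remark about counting repeated members of the multiset $\mathscr{F}$ with multiplicity is also exactly how the paper treats the degree condition.
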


\begin{proof}
\noindent

\begin{itemize}
\item Let $\set{X} \subseteq \OneTo{n}$ be a set that is selected uniformly at random from $\mathscr{M}$.
\item Represent $\set{X}$ by the binary random vector $X^n = (X_1, \ldots, X_n)$, where $X_i=\I{i \in \set{X}}$
for all $i \in \OneTo{n}$, so $X_i=1$ if $i \in \set{X}$ and $X_i=0$ otherwise.
\item For $\set{S} \in \mathscr{F}$, let $X_{\set{S}} = (X_i)_{i \in \set{S}}$. By the maximal entropy
theorem, which states that the entropy of a discrete random variable (or vector) is upper-bounded by the logarithm
of the cardinality of its support, with equality if and only if the variable is uniformly distributed over its support,
we get
\begin{align}
\label{eq3: 01.12.2024}
\Ent{X_{\set{S}}} \leq \log \, \bigl| \text{trace}_{\set{S}}(\mathscr{M}) \bigr|.
\end{align}
\item By the assumption that every element $i \in \OneTo{n}$ is included in at least $k \geq 1$
sets of $\mathscr{F}$, it follows from combining Shearer's lemma (Proposition~\ref{proposition: Shearer's Lemma})
and \eqref{eq3: 01.12.2024} that
\begin{align}
k \, \Ent{X^n} & \leq \sum_{\set{S} \in \mathscr{F}} \Ent{X_{\set{S}}} \nonumber \\
& \leq \sum_{\set{S} \in \mathscr{F}} \log \, \bigl| \text{trace}_{\set{S}}(\mathscr{M}) \bigr|.   \label{eq4: 01.12.2024}
\end{align}
\item The equality $\Ent{X^n} = \log \card{\mathscr{M}}$ holds since $X^n$ is in one-to-one correspondence with $\set{X}$,
which is uniformly selected at random from $\mathscr{M}$. Combining this with \eqref{eq4: 01.12.2024} gives
\begin{align}
\label{eq5: 01.12.2024}
\log \card{\mathscr{M}} \leq \frac1k \sum_{\set{S} \in \mathscr{F}} \log \, \bigl| \text{trace}_{\set{S}}(\mathscr{M}) \bigr|,
\end{align}
and exponentiating both sides of \eqref{eq5: 01.12.2024} gives \eqref{eq2: 01.12.2024}.
\end{itemize}
\end{proof}

The following is the probabilistic entropy-based formulation of Shearer's lemma, which is also applied in this paper.

\begin{proposition}[Probabilistic Version of Shearer's Lemma]
\label{proposition: Shearer's Lemma: 2nd version}
{\em Let $X^n$ be a discrete $n$-dimensional random vector, and let $\set{S} \subseteq \OneTo{n}$
be a random subset of $\OneTo{n}$, independent of $X^n$, with an arbitrary
probability mass function $\pmfOf{\set{S}}$. If there exists $\theta > 0$ such that
\begin{align}
\label{eq: Shearer's lemma - condition}
\Prv{i \in \set{S}} \geq \theta, \quad \forall \, i \in \OneTo{n},
\end{align}
then,
\begin{align}
\label{eq: Shearer's Lemma: 2nd version}
\bigExpecwrt{\set{S}}{\Ent{X_{\set{S}}}} \geq \theta \Ent{X^n}.
\end{align}}
\end{proposition}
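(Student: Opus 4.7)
The plan is to mimic the proof of Proposition~\ref{proposition: Shearer's Lemma}, but to keep track of the dependence on $\set{S}$ pointwise and only average over $\set{S}$ at the very end. The independence of $\set{S}$ and $X^n$ is what allows us to treat conditional entropies of the form $\EntCond{X_i}{X_1,\ldots,X_{i-1}}$ as deterministic numbers that factor out of the expectation with respect to $\set{S}$.

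First, I would fix an arbitrary realization $\set{S} = \{i_1 < \ldots < i_\ell\} \subseteq \OneTo{n}$ and apply the chain rule together with the fact that conditioning reduces entropy, exactly as in \eqref{eq10: 22.09.2024}, to obtain the pointwise bound
\begin{align*}
\Ent{X_{\set{S}}} \;\geq\; \sum_{i \in \set{S}} \EntCond{X_i}{X_1, \ldots, X_{i-1}}
\;=\; \sum_{i=1}^n \I{i \in \set{S}} \, \EntCond{X_i}{X_1, \ldots, X_{i-1}}.
\end{align*}
This step is purely a statement about the random vector $X^n$; the set $\set{S}$ only enters through the indicators, so no subtlety with joint distributions arises.

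Next, I would take the expectation of both sides with respect to $\set{S}$, swap the (finite) sum and the expectation, and use the independence of $\set{S}$ from $X^n$ to pull the conditional entropies out:
\begin{align*}
\Expecwrt{\set{S}}{\Ent{X_{\set{S}}}}
\;\geq\; \sum_{i=1}^n \Prv{i \in \set{S}} \, \EntCond{X_i}{X_1, \ldots, X_{i-1}}.
\end{align*}
Because each conditional entropy $\EntCond{X_i}{X_1, \ldots, X_{i-1}}$ is nonnegative (here is where the discreteness of $X^n$ is used), the hypothesis $\Prv{i \in \set{S}} \geq \theta$ from \eqref{eq: Shearer's lemma - condition} can be substituted termwise to give
\begin{align*}
\Expecwrt{\set{S}}{\Ent{X_{\set{S}}}}
\;\geq\; \theta \sum_{i=1}^n \EntCond{X_i}{X_1, \ldots, X_{i-1}}
\;=\; \theta \, \Ent{X^n},
\end{align*}
where the last equality is the chain rule for Shannon entropy, yielding \eqref{eq: Shearer's Lemma: 2nd version}.

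There is no real obstacle here; the only care needed is in the inequality $\Prv{i \in \set{S}} \geq \theta$ being applied inside a sum of nonnegative terms, and in using independence of $\set{S}$ and $X^n$ cleanly when interchanging the expectation with the sum. As in Remark~\ref{remark: Shearer's Lemma - exactly k elements}, if one instead assumes $\Prv{i \in \set{S}} = \theta$ for every $i$, then the last inequality becomes an equality and the result extends to continuous random vectors with differential entropy in place of Shannon entropy.
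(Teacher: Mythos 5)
Your proof is correct and follows essentially the same route as the paper: apply the pointwise lower bound \eqref{eq10: 22.09.2024} on $\Ent{X_{\set{S}}}$ for each realization of $\set{S}$, take expectation over $\set{S}$ using independence, swap sum and expectation, invoke nonnegativity of the conditional entropies together with $\Prv{i \in \set{S}} \geq \theta$, and finish with the chain rule. The paper writes the expectation out explicitly as a sum over realizations $\set{S} \subseteq \OneTo{n}$ weighted by $\pmfOf{\set{S}}$, but that is a purely notational difference from your presentation.
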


\begin{proof}
The expectation of the entropy $\Ent{X_{\set{S}}}$ with respect to the random subset $\set{S} \subseteq \OneTo{n}$,
where (by assumption) $\set{S}$ is independent of $X^n$, gives
\begin{align}
\bigExpecwrt{\set{S}}{\Ent{X_{\set{S}}}}
&= \sum_{\set{S} \, \subseteq \OneTo{n}} \pmfOf{\set{S}}(\set{S}) \, \Ent{X_{\set{S}}} \nonumber \\[0.1cm]
&\geq \sum_{\set{S} \, \subseteq \OneTo{n}}
\Biggl\{ \pmfOf{\set{S}}(\set{S}) \; \overset{n}{\underset{i=1}{\sum}}
\Bigl\{ \I{i \in \set{S}} \, \EntCond{X_i}{X_1, \ldots, X_{i-1}} \Bigr\} \Biggr\} \label{eq1:22.12.2024} \\[0.1cm]
&= \overset{n}{\underset{i=1}{\sum}} \Biggl\{ \sum_{\set{S} \, \subseteq \OneTo{n}}
\Bigl\{ \pmfOf{\set{S}}(\set{S}) \; \I{i \in \set{S}} \Bigr\} \, \EntCond{X_i}{X_1, \ldots, X_{i-1}} \Biggr\} \label{eq2:22.12.2024} \\
&= \overset{n}{\underset{i=1}{\sum}} \Prv{i \in \set{S}} \, \EntCond{X_i}{X_1, \ldots, X_{i-1}} \nonumber \\
&\geq \theta \, \overset{n}{\underset{i=1}{\sum}} \EntCond{X_i}{X_1, \ldots, X_{i-1}} \label{eq1: 23.09.2024} \\
&=\theta \Ent{X^n}, \label{eq: chain rule Ent}
\end{align}
where \eqref{eq1:22.12.2024} holds by \eqref{eq10: 22.09.2024} that is valid for every set $\set{S} \subseteq \OneTo{n}$;
\eqref{eq2:22.12.2024} holds by swapping the order of summation, and \eqref{eq1: 23.09.2024} holds by the assumption that
the random variables $\{X_i\}$ are discrete (so, the conditional entropies are nonnegative) and by the condition in
\eqref{eq: Shearer's lemma - condition}. Finally, \eqref{eq: chain rule Ent} holds by the chain rule of Shannon entropy.
\end{proof}

\begin{remark}
\label{remark 2: Shearer's lemma}
{\em Similarly to Remark~\ref{remark: Shearer's Lemma - exactly k elements}, if
$\Prv{i \in \set{S}} = \theta$ for all $i \in \OneTo{n}$, then
inequality \eqref{eq1: 23.09.2024} holds with equality. Consequently,
if the condition in \eqref{eq: Shearer's lemma - condition} is satisfied with equality
for all $i \in \OneTo{n}$, then \eqref{eq: Shearer's Lemma: 2nd version}
extends to continuous random variables, with entropies replaced by differential entropies.}
\end{remark}

\subsection{Intersecting families of graphs}
\label{subsection: Intersecting Families of Graphs}

We start by considering triangle-intersecting families of graphs, which was the problem in extremal combinatorics
addressed in \cite{ChungGFS86}.

\begin{definition}[Triangle-Intersecting Families of Graphs]
\label{definition: intersecting family of graphs}
{\em Let $\set{G}$ be a family of graphs on the vertex set $\OneTo{n}$, with the property that for
every $\Gr{G}_1, \Gr{G}_2 \in \set{G}$, the intersection $\Gr{G}_1 \cap \Gr{G}_2$ contains a triangle
(i.e, there are three vertices $i, j, k \in \OneTo{n}$ such that each of $\{i,j\}$, $\{i,k\}$, $\{j,k\}$
is in the edge sets of both $\Gr{G}_1$ and $\Gr{G}_2$). The family $\set{G}$ is referred to as a
{\em triangle-intersecting} family of graphs on $n$ vertices.}
\end{definition}

\begin{question}{(Simonovits and S\'os, \cite{SimonovitsS78})}
{\em How large can $\set{G}$ (a family of triangle-intersecting graphs) be?}
\end{question}

The family $\set{G}$ can be as large as $2^{\binom{n}{2}-3}$.
To that end, consider the family ${\mathcal G}$ of all graphs on $n$ vertices
that include a particular triangle.
On the other hand, $\card{\set{G}}$ cannot exceed $2^{\binom{n}{2}-1}$.
The latter upper bound holds since, in general, a family of distinct subsets of
a set of size $m$, where any two of these subsets have a nonempty intersection,
can have a cardinality of at most $2^{m-1}$ ($\set{A}$ and $\cset{A}$
cannot be members of this family). The edge sets of the graphs in $\set{G}$
satisfy this property, with $m=\binom{n}{2}$.

\begin{proposition}[Ellis, Filmus, and Friedgut, \cite{EllisFF12}]
\label{proposition: triangle-intersecting families}
{\em The size of a family ${\mathcal G}$ of triangle-intersecting graphs on $n$ vertices satisfies
$\card{\set{G}} \leq 2^{\binom{n}{2}-3}$, and this upper bound is attained by the family of
all graphs with a common vertex set of $n$ vertices, and with a fixed common triangle.}
\end{proposition}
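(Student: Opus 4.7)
The plan is to adopt the Fourier analytic / spectral framework of \cite{EllisFF12}, since the Shearer-based bound of Chung, Graham, Frankl, and Shearer is off by a factor of~$2$ from the desired bound. Identify each graph $\Gr{G}$ on $\OneTo{n}$ with its edge-indicator vector in $\{0,1\}^{E}$, where $E$ denotes the edge set of $\CoG{n}$, so that $\card{E} = \binom{n}{2}$. Model the problem on the Boolean hypercube: a triangle-intersecting family $\set{G}$ is an independent set in the graph $\Gamma$ whose vertices are all graphs on $\OneTo{n}$ and whose edges join pairs $(\Gr{G}_{1}, \Gr{G}_{2})$ with triangle-free intersection. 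The task then reduces to proving $\alpha(\Gamma) \leq 2^{\binom{n}{2}-3}$, where $\alpha$ denotes the independence number.

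The second step is to compute the spectrum of a suitable (possibly weighted) averaging operator associated with $\Gamma$. Decompose the space of real functions on $\{0,1\}^{E}$ in the orthonormal Walsh--Fourier basis $\chi_{S}(\Gr{G}) = (-1)^{\card{S \cap \Gr{G}}}$ for $S \subseteq E$, where $\Gr{G}$ is identified with its edge set. The eigenvalues of the operator are then Fourier-type coefficients of the indicator function $f(\Gr{H}) = \I{\Gr{H} \text{ contains no triangle}}$, evaluated against an appropriate biased product measure on $\{0,1\}^{E}$. The action of the symmetric group $S_{n}$ on the vertex labels of $\OneTo{n}$ further organises these eigenvalues into isotypic components indexed by the isomorphism type of $S$, which reduces the computation to a finite list of small graph types.

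Third, invoke the weighted Hoffman (ratio) bound on $\alpha(\Gamma)$, yielding
\begin{align*}
\frac{\card{\set{G}}}{2^{\binom{n}{2}}} \; \leq \; \frac{-\lambda_{\min}}{\lambda_{\max} - \lambda_{\min}}.
\end{align*}
The main obstacle, and the technical heart of the argument, is to show that the right-hand side equals exactly $\tfrac{1}{8}$, and that the extremal negative eigenvalue is attained precisely by characters $\chi_{S}$ whose support $S$ is a triangle. This step requires a careful choice of the weighting measure (a biased product measure whose parameter is tuned to kill the contribution of non-triangle supports) and sharp estimates on the count of triangle-free extensions of prescribed small subgraphs; it is here that the argument genuinely goes beyond what Shearer's lemma can deliver.

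Finally, the equality analysis in the Hoffman bound forces an extremal family $\set{G}$ to lie in the $(-1)$-eigenspace spanned by the triangle-indexed characters. A short combinatorial unfolding of this spectral condition identifies $\set{G}$ with a dictator family $\{\Gr{G} : \Gr{G} \supseteq \Gr{T}\}$ for a fixed triangle $\Gr{T} \subseteq \CoG{n}$. Since any such dictator family has size exactly $2^{\binom{n}{2}-3}$, this simultaneously verifies the upper bound, its tightness, and the characterization of the extremal families.
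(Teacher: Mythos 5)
The paper does not prove this proposition; it merely states the result, cites Ellis, Filmus, and Friedgut, and notes that their proof uses discrete Fourier analysis of Boolean functions. There is therefore no ``paper's own proof'' for your attempt to be measured against.

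Your proposal is a road map of the Ellis--Filmus--Friedgut approach rather than a proof. The setup is fine: encode graphs as edge-indicator vectors in $\{0,1\}^{E}$, realize triangle-intersecting families as independent sets in an auxiliary Cayley-type graph on the cube, work in the $S_n$-equivariant Walsh--Fourier basis, and aim for a weighted Hoffman-type ratio bound. But you then explicitly punt on the only hard step, writing that ``the main obstacle, and the technical heart of the argument, is to show that the right-hand side equals exactly $\tfrac{1}{8}$\,\dots\ it is here that the argument genuinely goes beyond what Shearer's lemma can deliver.'' That unfilled step is exactly where the theorem lives; everything preceding it is generic scaffolding that applies equally to any intersection problem on $\{0,1\}^{E}$ and, by itself, yields nothing sharper than the trivial $\card{\set{G}} \le 2^{\binom{n}{2}-1}$. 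The construction of a suitable pseudo-adjacency operator whose negative spectrum is controlled precisely at the triangle level, the compression step reducing to monotone families so that such a spectral bound is usable, the actual eigenvalue computations showing the ratio is $\tfrac18$, and the stability analysis needed for the uniqueness clause of the statement are all absent. Your characterization of the needed operator as a biased product measure with a tuned parameter is also too glib --- a straightforward product-weighted Cayley operator does not give the sharp constant, which is why the EFF argument is a paper and not a paragraph. Acknowledging a gap is not the same as closing it; as written, the proposal does not establish the bound.
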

This result was proved by using discrete Fourier analysis to obtain the sharp bound
in Proposition~\ref{proposition: triangle-intersecting families}, as conjectured by
Simonovits and S\'{o}s \cite{SimonovitsS78}.

The first significant progress toward proving the Simonovits–S\'{o}s conjecture came from an
information-theoretic approach \cite{ChungGFS86}.
Using the combinatorial Shearer lemma (Proposition~\ref{proposition: Combinatorial Shearer's Lemma}),
a simple and elegant upper bound on the size of $\set{G}$ was derived in \cite{ChungGFS86}.
That bound is equal to $2^{\binom{n}{2}-2}$, falling short of the Simonovits–S\'{o}s conjecture by
a factor of~2.

\begin{proposition}[Chung, Graham, Frankl, and Shearer, \cite{ChungGFS86}] \label{proposition: Chung et al., 1986}
{\em Let $\set{G}$ be a family of $\CoG{3}$-intersecting graphs on a common vertex set $\OneTo{n}$.
Then, $\card{\set{G}} \leq 2^{\binom{n}{2}-2}$.}
\end{proposition}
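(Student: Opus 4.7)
The plan is to apply the combinatorial Shearer lemma (Proposition~\ref{proposition: Combinatorial Shearer's Lemma}) with ground set $\binom{\OneTo{n}}{2}$, taking $\mathscr{M}$ to be the family of edge sets of the graphs in $\set{G}$. The art is to choose a well-designed multiset $\mathscr{F}$ of subsets of $\binom{\OneTo{n}}{2}$ such that (i) every edge is covered the same (and large) number of times, so the Shearer parameter $k$ is large, and (ii) for each $\set{S} \in \mathscr{F}$, the trace $\text{trace}_{\set{S}}(\mathscr{M})$ is \emph{forced} to be set-theoretically pairwise intersecting, so the elementary bound $|\text{trace}_{\set{S}}(\mathscr{M})| \leq 2^{|\set{S}|-1}$ (pairing each set with its complement) applies and beats the trivial $2^{|\set{S}|}$.

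The natural choice is to index $\mathscr{F}$ by the $2^n$ two-colorings $f : \OneTo{n} \to \{0,1\}$ and set
\begin{align*}
\set{S}_f \; \eqdef \; \bigl\{ \{i,j\} \in \tbinom{\OneTo{n}}{2} : f(i) = f(j) \bigr\},
\end{align*}
the set of $f$-monochromatic edges. Since a triangle has three vertices but only two colors are available, pigeonhole forces every triangle to contain at least one edge in $\set{S}_f$. Consequently, whenever $\Gr{G}_1,\Gr{G}_2 \in \set{G}$ share a triangle $T$, their restrictions to $\set{S}_f$ share an edge of $T$; thus $\text{trace}_{\set{S}_f}(\mathscr{M})$ is pairwise intersecting and $|\text{trace}_{\set{S}_f}(\mathscr{M})| \leq 2^{|\set{S}_f|-1}$. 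For the uniformity count, a fixed edge $\{i,j\}$ lies in $\set{S}_f$ iff $f(i)=f(j)$, which holds for exactly $k=2^{n-1}$ of the $2^n$ colorings. Plugging into Proposition~\ref{proposition: Combinatorial Shearer's Lemma} gives
\begin{align*}
\card{\set{G}}^{2^{n-1}} \; \leq \; \prod_{f:\OneTo{n}\to\{0,1\}} 2^{|\set{S}_f|-1} \; = \; 2^{\sum_f |\set{S}_f| \, - \, 2^n},
\end{align*}
and a double count of pairs $(e,f)$ with $e \in \set{S}_f$ yields $\sum_f |\set{S}_f| = \binom{n}{2} \cdot 2^{n-1}$. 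Dividing the exponent by $2^{n-1}$ produces $\binom{n}{2} - 2$, and exponentiating yields $\card{\set{G}} \leq 2^{\binom{n}{2}-2}$.

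The main conceptual hurdle is the choice of $\mathscr{F}$: the monochromatic-edge sets package precisely the fact that a triangle is not $2$-colorable, which is what couples the hypothesis on $\set{G}$ to the trace bound. Any other natural candidate (for example, the sets $\binom{\OneTo{n}\setminus\{i\}}{2}$) fails to force the intersecting property on every trace or fails to give the correct arithmetic. Once this choice is made, the remaining work is only a clean double-counting, which is why the argument saves exactly a factor of $2^2$ below the trivial intersecting-family bound $2^{\binom{n}{2}-1}$ and loses exactly a factor of $2$ against the Simonovits--S\'{o}s conjectured value $2^{\binom{n}{2}-3}$.
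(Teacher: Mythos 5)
Your proof is correct and follows essentially the same Shearer-lemma route as the paper's proof of the general bound in Proposition~\ref{proposition: I.S., 2024} (which specializes to this statement at $\Gr{H}=\CoG{3}$, $t=\chrnum{\CoG{3}}=3$): in both cases $\mathscr{M}$ is the family of edge sets, $\mathscr{F}$ consists of sets of monochromatic edges under two-colorings, the pigeonhole principle guarantees a monochromatic triangle edge so each trace is an intersecting family bounded by $2^{|\set{S}|-1}$, and a double count finishes. The only difference is the choice of $\mathscr{F}$: you take all $2^n$ two-colorings, for which the double-count yields exactly $\binom{n}{2}-2$, whereas the paper restricts to balanced bipartitions, which makes $|\set{S}|$ minimal and yields an exponent slightly below $\binom{n}{2}-2$ that is then relaxed to the stated bound; the two choices are interchangeable for the purpose of this proposition.
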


We next consider more general intersecting families of graphs.

\begin{definition}[$\Gr{H}$-Intersecting Families of Graphs]  \label{definition: H-intersecting family of graphs}
{\em Let $\mathcal{G}$ be a family of graphs on a common vertex set. Then, it is said that $\mathcal{G}$ is $\Gr{H}$-intersecting
if for every two graphs $\Gr{G}_1, \Gr{G}_2 \in \mathcal{G}$, the graph $\Gr{G}_1 \cap \Gr{G}_2$ contains a subgraph isomorphic
to $\Gr{H}$.}
\end{definition}

In the following, $\CoG{t}$, for $t \in \naturals$, denotes the complete graph on $t$ vertices. This graph consists of $t$ vertices,
with every pair of vertices being adjacent. For example, $\CoG{2}$ represents an edge, while $\CoG{3}$ corresponds to a triangle.
\begin{example}
{\em Let $\Gr{H} = \CoG{t}$, with $t \geq 2$. Then,
$t=2$ means that $\mathcal{G}$ is edge-intersecting (or simply intersecting),
and $t=3$ means that $\mathcal{G}$ is triangle-intersecting.}
\end{example}

\begin{question}{[Problem in Extremal Combinatorics]}
{\em Given $\Gr{H}$ and $n$, what is the maximum size of an $\Gr{H}$-intersecting family of graphs on $n$ labeled vertices?}
\end{question}

\begin{conjecture}{(Ellis, Filmus, and Friedgut, \cite{EllisFF12})}
\label{conjecture: EllisFF12}
{\em Every $\CoG{t}$-intersecting family of graphs on a common vertex set $\OneTo{n}$ has a size of at most $2^{\binom{n}{2}-\binom{t}{2}}$,
with equality for the family of all graphs containing a fixed clique on $t$ vertices.}
\end{conjecture}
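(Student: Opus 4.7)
The plan is to adopt the Fourier-analytic / spectral strategy that Ellis, Filmus, and Friedgut \cite{EllisFF12} used to settle the case $t = 3$, and that Berger and Zhao \cite{BergerZ23} later extended to $t = 4$. First, reformulate the problem spectrally: identify each graph on the vertex set $\OneTo{n}$ with its edge indicator in $\{0,1\}^{\binom{n}{2}}$, and define a Cayley graph $\Gamma_t$ on this cube in which two vertices $x, y$ are adjacent if and only if the graph with edge set $x \wedge y$ (coordinate-wise AND) contains no copy of $\CoG{t}$. A $\CoG{t}$-intersecting family $\set{G}$ is then exactly an independent set in $\Gamma_t$, and Conjecture~\ref{conjecture: EllisFF12} reduces to the spectral bound $\alpha(\Gamma_t) \leq 2^{\binom{n}{2} - \binom{t}{2}}$ on its independence number.

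Second, attack $\alpha(\Gamma_t)$ via the Hoffman ratio bound: if $\Gamma_t$ is $D$-regular with least eigenvalue $\lambda_{\min}$, then
\begin{equation*}
\alpha(\Gamma_t) \leq \frac{-\lambda_{\min}}{D - \lambda_{\min}} \cdot 2^{\binom{n}{2}}.
\end{equation*}
Since $\Gamma_t$ is an abelian Cayley graph, its eigenvalues are the Fourier coefficients of the indicator function of the connection set $\set{A}_t = \{z \in \{0,1\}^{\binom{n}{2}} : z \text{ contains no } \CoG{t}\}$, indexed by characters $\chi_S$ with $S \subseteq \binom{\OneTo{n}}{2}$. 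The crux is to identify the $S$ attaining $\lambda_{\min}$ and to prove that the resulting ratio equals $2^{-\binom{t}{2}}$. For $t = 3$, the extremal $S$ are (up to symmetry) edge sets of triangles, and the natural guess for general $t$ is that they correspond to edge sets of copies of $\CoG{t}$.

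Third, exploit the $S_n$-equivariance of $\Gamma_t$ to decompose $L^2(\{0,1\}^{\binom{n}{2}})$ into isotypic components, and apply a stability / junta-type argument (in the spirit of Friedgut's theorem on the Boolean cube) to conclude that any extremal family must be a $\binom{t}{2}$-junta supported on the edges of a fixed copy of $\CoG{t}$; this yields uniqueness of the extremizer for $n$ sufficiently large relative to $t$.

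The main obstacle is step two: pinning down $\lambda_{\min}$ for $t \geq 5$. The recursive spectral computations available for $t = 3, 4$ break down as $t$ grows, because the interplay between Kruskal-Katona-type counts of $\CoG{t}$-subgraphs in a typical element of $\set{A}_t$ and the Fourier expansion on the $\binom{n}{2}$-dimensional cube lacks a clean inductive structure. A complementary, and weaker, line of attack that stays within the framework of the present paper is to apply the combinatorial Shearer lemma (Proposition~\ref{proposition: Combinatorial Shearer's Lemma}) with a judicious cover of $\binom{\OneTo{n}}{2}$ by edge sets of large induced subgraphs of $\CoG{n}$, using the $\CoG{t}$-intersection constraint to control each trace; this should yield a bound of the form $2^{\binom{n}{2} - c(t)}$ with $c(t) < \binom{t}{2}$, generalizing Proposition~\ref{proposition: Chung et al., 1986} but still falling short of the conjecture by a multiplicative constant.
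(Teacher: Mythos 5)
This is a conjecture, not a theorem: the paper states it (attributed to Ellis, Filmus, and Friedgut) and explicitly records that it is proven only for $t \le 4$ (trivial for $t=2$, \cite{EllisFF12} for $t=3$, \cite{BergerZ23} for $t=4$) and \emph{open for $t \ge 5$}. The paper offers no proof, so there is nothing to compare your argument against. Your write-up is, appropriately, a strategy sketch rather than a proof, and you are candid that step two --- pinning down $\lambda_{\min}$ of the Cayley graph $\Gamma_t$ for $t \ge 5$ --- is exactly where the known Fourier/spectral route stalls; that is the actual open problem, and nothing in your proposal resolves it, so the sketch cannot be counted as a proof of the conjecture.

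One concrete point worth flagging: the ``complementary, weaker line of attack'' you mention at the end --- applying the combinatorial Shearer lemma (Proposition~\ref{proposition: Combinatorial Shearer's Lemma}) with a cover of $\binom{\OneTo{n}}{2}$ by traces constrained via the $\CoG{t}$-intersection property --- is essentially what the paper carries out in Proposition~\ref{proposition: I.S., 2024} and Corollary~\ref{corollary: complete graphs}. There the cover is built from balanced $(t-1)$-partitions of $\OneTo{n}$, exploiting that $\chrnum{\CoG{t}} = t$ forces any copy of $\CoG{t}$ to place an edge inside some part; the resulting exponent deficit is $c(t) = \chrnum{\CoG{t}} - 1 = t-1$, which indeed satisfies $c(t) < \binom{t}{2}$ for $t \ge 3$, matching your prediction that Shearer alone falls short of the conjectured $\binom{t}{2}$. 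Also note a minor imprecision in your Hoffman-bound step: for the clean ratio bound you need $\Gamma_t$ to be regular and the value $-\lambda_{\min}/(D-\lambda_{\min})$ to equal exactly $2^{-\binom{t}{2}}$; the latter is not known and is in fact the heart of what would need to be proved, so the sentence ``the crux is to \ldots prove that the resulting ratio equals $2^{-\binom{t}{2}}$'' correctly locates the gap but should not be presented as if it were a routine verification.
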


\begin{itemize}
\item For $t=2$, it is trivial (since $\CoG{2}$ is an edge).
\item For $t=3$, it was proved by Ellis, Filmus, and Friedgut \cite{EllisFF12}.
\item For $t=4$, it was recently proved by Berger and Zhao \cite{BergerZ23}.
\item For $t \geq 5$, this problem is left open.
\end{itemize}

\subsection{Counting graph homomorphisms}
\label{subsection: Counting Graph Homomorphisms}
In the sequel, let $\V{\Gr{H}}$ and $\E{\Gr{H}}$ denote the vertex and edge sets of a graph $\Gr{H}$, respectively.
Further, let $\Gr{T}$ and $\Gr{G}$ be finite, simple, and undirected graphs, and denote the edge connecting a pair
of adjacent vertices $u,v \in \V{\Gr{H}}$ by an edge $e = \{u,v\} \in \E{\Gr{H}}$.

\begin{definition}[Homomorphism]
\label{definition: homomorphism}
{\em A {\em homomorphism} from $\Gr{T}$ to $\Gr{G}$, denoted by $\Gr{T} \to \Gr{G}$, is a mapping of the vertices
of $\Gr{T}$ to those of $\Gr{G}$, $\sigma \colon \V{\Gr{T}} \to \V{\Gr{G}}$, such that every edge in $\Gr{T}$
is mapped to an edge in $\Gr{G}$:
\begin{align}
\{u,v\} \in \E{\Gr{T}} \implies \{\sigma(u), \sigma(v)\} \in \E{\Gr{G}}.
\end{align}
On the other hand, non-edges in $\Gr{T}$ may be mapped to the same vertex, a non-edge, or an
edge in $\Gr{G}$.}
\end{definition}

\begin{example}
{\em There is a homomorphism from every bipartite graph $\Gr{G}$ to $\CoG{2}$.
Indeed, let $\V{\Gr{G}} = \set{X} \cup \set{Y}$, where $\set{X}$ and $\set{Y}$ are the two
disjoint partite sets. A mapping that maps every vertex in $\set{X}$ to~'0', and every vertex
in $\set{Y}$ to~'1' is a homomorphism $\Gr{G} \to \CoG{2}$ because every edge in $\Gr{G}$ is
mapped to the edge $\{0,1\}$ in $\CoG{2}$. Note that every non-edge in $\set{X}$ or in $\set{Y}$
is mapped to the same vertex in $\CoG{2}$, and every non-edge between two vertices in $\set{X}$
and $\set{Y}$ is mapped to $\{0,1\}$.}
\end{example}

The following connects graph homomorphisms to graph invariants.
Let $\clnum{\Gr{G}}$ and $\chrnum{\Gr{G}}$ denote the clique number and chromatic number, respectively, of
a finite, simple, and undirected graph $\Gr{G}$. That is, $\clnum{\Gr{G}}$ is the maximum number
of vertices in $\Gr{G}$ such that every two of them are adjacent (i.e., these vertices form the vertex set of
a complete subgraph of $\Gr{G}$), and $\chrnum{\Gr{G}}$ is the smallest number of colors required to color the
vertices of $\Gr{G}$ such that no two adjacent vertices share the same color. Then,
\begin{itemize}
\item $\clnum{\Gr{G}}$ is the largest integer $k$ for which a homomorphism $\CoG{k} \to \Gr{G}$
exists. This holds because the image of a complete graph under a homomorphism is a complete graph
of the same size. This is valid because a homomorphism preserves adjacency, and for a complete graph
$\CoG{k}$, all pairs of vertices are adjacent. To preserve this property, the image of $\CoG{k}$
under the homomorphism must also be a complete graph of size $k$.
\item A graph $\Gr{G}$ is $k$-colorable if and only if it has a homomorphism to the complete graph $\CoG{k}$;
this is because $k$-coloring assigns one of $k$ colors to each vertex such that adjacent vertices receive
different colors, which is equivalent to mapping the vertices of $\Gr{G}$ to the $k$ vertices of $\CoG{k}$
in a way that adjacency is preserved. Consequently, it follows by definition that
$\chrnum{\Gr{G}}$ is the smallest integer $k$ for which there exists a homomorphism $\Gr{G} \to \CoG{k}$.
\end{itemize}

Let $\Hom{\Gr{T}}{\Gr{G}}$ denote the set of all the homomorphisms $\Gr{T} \to \Gr{G}$, and let
\begin{align}
\label{eq: number of homomorphisms}
\homcount{\Gr{T}}{\Gr{G}} \triangleq \bigcard{\Hom{\Gr{T}}{\Gr{G}}}
\end{align}
denote the number of these homomorphisms.

The independence number of a graph $\Gr{G}$, denoted by $\indnum{\Gr{G}}$, is the maximum number of vertices in $\Gr{G}$
such that no two of them are adjacent. It can be formulated as an integer linear program, whose relaxation gives rise to
the following graph invariant.
\begin{definition}[Fractional Independence Number]
\label{definition: fractional independence number}
{\em The fractional independence number of a graph $\Gr{G}$, denoted as $\findnum{\Gr{G}}$,
is a fractional relaxation of the independence number $\indnum{\Gr{G}}$. It is defined as
the optimal value of the following linear program:
\begin{itemize}
\item Optimization variables: $x_v$ for every vertex $v \in \V{\Gr{G}}$.
\item Objective: Maximize $\underset{v \in \V{\Gr{G}}}{\sum} x_v$.
\item Constraints: $x_v \geq 0$ for all $v \in \V{\Gr{G}}$, and $\underset{v \in \set{C}}{\sum} x_v \leq 1$
for every clique $\set{C} \subseteq \V{\Gr{G}}$.
\end{itemize}
This relaxation allows fractional values for $x_v$, in contrast to the integer programming formulation for
$\indnum{\Gr{G}}$, where $x_v$ must be binary (either~0 or~1) for all $v \in \V{\Gr{G}}$. Consequently,
$\findnum{\Gr{G}} \geq \indnum{\Gr{G}}$.}
\end{definition}

The following result was obtained by Alon \cite{Alon81} and by Friedgut and Khan \cite{FriedgutK98},
where the latter provides an entropy-based proof that relies on Shearer's lemma with
an extension of the result on the number of homomorphisms for hypergraphs.
\begin{proposition}[Number of Graph Homomorphisms]
\label{proposition: UB on the number of homomorphisms}
{\em Let $\Gr{T}$ and $\Gr{G}$ be finite, simple, and undirected graphs, having no isolated vertices.
Then,
\begin{align}
\label{eq1: 11.09.2024}
\homcount{\Gr{T}}{\Gr{G}} \leq \Bigl( 2 \, \bigcard{\E{\Gr{G}}} \Bigr)^{\findnum{\Gr{T}}}.
\end{align}
Furthermore, the upper bound in \eqref{eq1: 11.09.2024} is essentially tight for a fixed graph $\Gr{T}$
in the sense that there exists a graph $\Gr{G}$ such that
\begin{align}
\label{eq1b: 11.09.2024}
\homcount{\Gr{T}}{\Gr{G}} \geq \left( \frac{\bigcard{\E{\Gr{G}}}}{\bigcard{\E{\Gr{T}}}}\right)^{\findnum{\Gr{T}}},
\end{align}
so, a comparison between the upper and lower bounds on the number of graph homomorphisms in \eqref{eq1: 11.09.2024}
and \eqref{eq1b: 11.09.2024} indicates that $\homcount{\Gr{T}}{\Gr{G}}$ scales like $\bigcard{\E{\Gr{G}}}^{\, \findnum{\Gr{T}}}$.}
\end{proposition}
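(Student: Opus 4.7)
The plan is to apply the probabilistic Shearer's lemma (Proposition~\ref{proposition: Shearer's Lemma: 2nd version}) to a uniformly random homomorphism, combined with LP duality between the fractional independence number and the fractional edge cover number.

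For the upper bound \eqref{eq1: 11.09.2024}, I would let $\sigma$ be chosen uniformly from $\Hom{\Gr{T}}{\Gr{G}}$ and set $X_v \triangleq \sigma(v)$ for every $v \in \V{\Gr{T}}$; by uniformity, $\Ent{(X_v)_{v \in \V{\Gr{T}}}} = \log \homcount{\Gr{T}}{\Gr{G}}$. Applying LP duality to the edge (size-two) clique constraints of Definition~\ref{definition: fractional independence number}, the quantity $\findnum{\Gr{T}}$ coincides with the fractional edge cover number $\rho^*(\Gr{T}) = \min \bigl\{\sum_{e \in \E{\Gr{T}}} y_e : y \geq 0, \; \sum_{e \ni v} y_e \geq 1 \; \forall v \in \V{\Gr{T}}\bigr\}$, which is well defined because $\Gr{T}$ has no isolated vertex. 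I would then take an optimal fractional edge cover $\{y_e^*\}$ and normalize it to a probability distribution on $\E{\Gr{T}}$ via $\pmfOf{\set{S}}(e) = y_e^*/\rho^*(\Gr{T})$, so that the random edge $\set{S}$ satisfies $\Prv{v \in \set{S}} \geq 1/\rho^*(\Gr{T}) =: \theta$ for every $v \in \V{\Gr{T}}$. Since $\sigma$ is a homomorphism, for each $e = \{u,v\} \in \E{\Gr{T}}$ the pair $(X_u, X_v)$ must be an ordered edge of $\Gr{G}$ and hence takes at most $2 \bigcard{\E{\Gr{G}}}$ values, yielding $\Ent{X_{\set{S}}} \leq \log \bigl(2 \bigcard{\E{\Gr{G}}}\bigr)$ almost surely. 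Substituting into \eqref{eq: Shearer's Lemma: 2nd version} with this $\theta$ and exponentiating produces \eqref{eq1: 11.09.2024}.

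For the matching lower bound \eqref{eq1b: 11.09.2024}, I would construct $\Gr{G}$ as a weighted blow-up of $\Gr{T}$ driven by an optimal fractional independent set $\{x_v^*\}$. Fix a large integer $N$ and, for each $v \in \V{\Gr{T}}$, let $V_v$ be an independent set of size $\lfloor N^{x_v^*} \rfloor$; for every $\{u,v\} \in \E{\Gr{T}}$, place a complete bipartite graph between $V_u$ and $V_v$ in $\Gr{G}$. Every assignment $v \mapsto \sigma(v) \in V_v$ is then a homomorphism, so $\homcount{\Gr{T}}{\Gr{G}} \geq \prod_{v \in \V{\Gr{T}}} |V_v| \approx N^{\findnum{\Gr{T}}}$. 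Meanwhile $\bigcard{\E{\Gr{G}}} = \sum_{\{u,v\} \in \E{\Gr{T}}} |V_u|\,|V_v| \leq \bigcard{\E{\Gr{T}}} \cdot N$, using that $x_u^* + x_v^* \leq 1$ for every edge. Rearranging produces $\bigl(\bigcard{\E{\Gr{G}}}/\bigcard{\E{\Gr{T}}}\bigr)^{\findnum{\Gr{T}}} \leq N^{\findnum{\Gr{T}}} \leq \homcount{\Gr{T}}{\Gr{G}}$, establishing \eqref{eq1b: 11.09.2024} after sending $N \to \infty$ and absorbing rounding errors.

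The main technical hurdle I anticipate is certifying the LP duality identity $\findnum{\Gr{T}} = \rho^*(\Gr{T})$ from the edge constraints in Definition~\ref{definition: fractional independence number}; once this is in hand, both parts become routine: a direct substitution into Proposition~\ref{proposition: Shearer's Lemma: 2nd version} for the upper bound, and the blow-up construction for the lower bound.
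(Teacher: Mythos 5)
The paper does not give its own proof of this proposition; it states it as a known result, citing Alon \cite{Alon81} and Friedgut--Kahn \cite{FriedgutK98}, noting that the latter provides an entropy proof via Shearer's lemma. Your upper bound is precisely that argument (a uniformly random homomorphism, Shearer applied to a random edge drawn from a normalized fractional edge cover), and your lower bound is Alon's weighted blow-up construction; in outline both halves are sound and match the cited literature.

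The step that deserves more care is the asserted identity $\findnum{\Gr{T}} = \rho^*(\Gr{T})$. LP duality pairs the fractional edge cover LP with the LP maximizing $\sum_v x_v$ subject to $x_v \geq 0$ and $x_u + x_v \leq 1$ for every \emph{edge} $\{u,v\}$. Definition~\ref{definition: fractional independence number}, however, imposes $\sum_{v \in \set{C}} x_v \leq 1$ for \emph{every} clique $\set{C}$, which is a strictly stronger constraint set whenever $\Gr{T}$ contains a clique of size three or more; its optimum therefore satisfies $\findnum{\Gr{T}} \leq \rho^*(\Gr{T})$, and the inequality can be strict (for $\Gr{T} = \CoG{3}$, the triangle constraint forces the clique-constrained value to $1$, while $\rho^*(\CoG{3}) = 3/2$). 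What your Shearer argument actually proves is $\homcount{\Gr{T}}{\Gr{G}} \leq (2\,\card{\E{\Gr{G}}})^{\rho^*(\Gr{T})}$, which is the correct form of the theorem. Indeed, under the all-clique reading of Definition~\ref{definition: fractional independence number}, the stated bound \eqref{eq1: 11.09.2024} would be false: with $\Gr{T} = \CoG{3}$ and $\Gr{G} = \CoG{n}$ one has $\homcount{\CoG{3}}{\CoG{n}} \sim n^3$ but $(2\,\card{\E{\CoG{n}}})^{1} \sim n^2$. So you are implicitly (and necessarily) reading Definition~\ref{definition: fractional independence number} with size-two clique constraints only; it would be worth saying so explicitly rather than invoking duality against the definition as written.

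One small technical point in the lower bound: the floors $\lfloor N^{x_v^*}\rfloor$ introduce multiplicative losses that do not vanish as $N \to \infty$ on both sides of the comparison. The clean fix is to note that an optimal $\{x_v^*\}$ can be taken rational with a common denominator $q$, and then to choose $N = M^q$ so that every $N^{x_v^*}$ is an integer, making \eqref{eq1b: 11.09.2024} hold exactly with no rounding to absorb.
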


\section{Intersecting families of graphs}
\label{section: entropy bounds - Intersecting Families of Graphs}

This section derives an upper bound on the maximum cardinality of a family of $\Gr{H}$-intersecting graphs on a fixed
number of vertices, where the intersection of every two graphs in that family contains a subgraph that
is isomorphic to a specified graph $\Gr{H}$. Specifically, the next result generalizes Proposition~\ref{proposition: Chung et al., 1986}
by extending the proof technique of \cite{ChungGFS86} to apply to all families of $\Gr{H}$-intersecting graphs.

\begin{proposition}
\label{proposition: I.S., 2024}
{\em Let $\Gr{H}$ be a nonempty graph, and let $\set{G}$ be a family of $\Gr{H}$-intersecting graphs
on a common vertex set $\OneTo{n}$. Then,
\begin{align}
\label{eq1: 31.01.2025}
\card{\set{G}} \leq 2^{\binom{n}{2}-(\chrnum{\Gr{H}}-1)}.
\end{align}}
\end{proposition}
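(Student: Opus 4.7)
My approach is to apply the combinatorial Shearer lemma (Proposition~\ref{proposition: Combinatorial Shearer's Lemma}) with $\mathscr{M} = \set{G}$ and a cover $\mathscr{F}$ of $\E{\CoG{n}}$ indexed by all $(\chrnum{\Gr{H}}-1)$-colorings of $\OneTo{n}$. This extends the argument of Chung, Graham, Frankl, and Shearer~\cite{ChungGFS86} (there specialized to $\Gr{H} = \CoG{3}$) by letting the chromatic number $\chi \triangleq \chrnum{\Gr{H}}$ enter the bound through the fact that $\Gr{H}$ admits no proper $(\chi-1)$-coloring.

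Concretely, for each function $c\colon \OneTo{n} \to \OneTo{\chi-1}$ I define the set of $c$-monochromatic edges
\[
\set{S}_c \triangleq \bigl\{\{u,v\}\in \E{\CoG{n}} : c(u) = c(v) \bigr\}.
\]
The core observation is that, for any embedding of $\Gr{H}$ into $\CoG{n}$, composing $c$ with the embedding yields a $(\chi-1)$-coloring of $\V{\Gr{H}}$, which cannot be proper; hence at least one edge of the embedded copy of $\Gr{H}$ lies in $\set{S}_c$. Applied to any pair $\Gr{G}_1, \Gr{G}_2 \in \set{G}$, whose intersection contains a copy of $\Gr{H}$, this forces $\Gr{G}_1 \cap \Gr{G}_2 \cap \set{S}_c \neq \emptyset$, so $\text{trace}_{\set{S}_c}(\set{G})$ is a pairwise intersecting family of subsets of $\set{S}_c$. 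Since $n \geq \card{\V{\Gr{H}}} \geq \chi > \chi - 1$, the graph $\CoG{n}$ is itself not $(\chi-1)$-colorable and $\card{\set{S}_c} \geq 1$; hence the classical bound on intersecting families yields $\bigcard{\text{trace}_{\set{S}_c}(\set{G})} \leq 2^{\card{\set{S}_c}-1}$.

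To invoke Proposition~\ref{proposition: Combinatorial Shearer's Lemma} I check that $\mathscr{F} = \{\set{S}_c\}_c$ covers $\E{\CoG{n}}$ uniformly: a given edge $\{u,v\}$ lies in $\set{S}_c$ for exactly $k \triangleq (\chi-1)^{n-1}$ colorings (those with $c(u)=c(v)$), independently of the edge. Substituting the trace bound into the Shearer inequality and simplifying yields
\[
\card{\set{G}} \leq \prod_c 2^{(\card{\set{S}_c}-1)/k}
= 2^{\,\bigl[\sum_c \card{\set{S}_c} \,-\, (\chi-1)^n\bigr]/k},
\]
and a double-counting step, $\sum_c \card{\set{S}_c} = \binom{n}{2}(\chi-1)^{n-1}$, reduces the exponent to $\binom{n}{2}-(\chi-1)$, which is~\eqref{eq1: 31.01.2025}. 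The only conceptual hurdle is identifying the correct cover; once $(\chi-1)$-colorings are used, both the trace bound and the uniform coverage multiplicity fall out of the chromatic-number viewpoint, after which the remaining calculation is routine.
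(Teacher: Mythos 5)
Your proof is correct, and it reaches the stated bound by a genuinely different choice of cover $\mathscr{F}$, even though the key ideas (combinatorial Shearer, the intersecting-trace bound $2^{\card{\set{S}}-1}$, and the observation that any $(\chrnum{\Gr{H}}-1)$-coloring of $\OneTo{n}$ forces a monochromatic edge in every embedded copy of $\Gr{H}$) coincide with the paper's. The paper indexes $\mathscr{F}$ by the (almost-)balanced unordered partitions of $\OneTo{n}$ into $\chrnum{\Gr{H}}-1$ blocks, so that every member of $\mathscr{F}$ has the same size $m$; this necessitates the auxiliary calculation \eqref{eq1:17.12.2024}--\eqref{eq1b:17.12.2024} establishing $m\leq\binom{n}{2}/(\chrnum{\Gr{H}}-1)$, after which the double-counting identity $m\,\card{\mathscr{F}}=\binom{n}{2}k$ and the relaxation step \eqref{eq3: 17.12.2024} produce the bound. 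You instead index $\mathscr{F}$ by all functions $c\colon\OneTo{n}\to\OneTo{\chrnum{\Gr{H}}-1}$, accept that $\card{\set{S}_c}$ varies, and let the exact identities $k=(\chrnum{\Gr{H}}-1)^{n-1}$ and $\sum_c\card{\set{S}_c}=\binom{n}{2}(\chrnum{\Gr{H}}-1)^{n-1}$ make the exponent $\binom{n}{2}-(\chrnum{\Gr{H}}-1)$ fall out with no remaining slack; this dispenses with both the balancedness casework and the lemma bounding $m$, so your derivation is shorter. The tradeoff is that the paper's balanced-partition cover yields the (sometimes strictly) sharper intermediate bound $\card{\set{G}}\leq 2^{\binom{n}{2}(1-1/m)}$ before the final relaxation, whereas your cover goes straight to the target. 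One small point worth stating explicitly is that $\card{\set{S}_c}\geq 1$ for all $c$ whenever $\set{G}\neq\emptyset$ (otherwise the $2^{\card{\set{S}_c}-1}$ bound degenerates); you do address this via $n\geq\card{\V{\Gr{H}}}\geq\chrnum{\Gr{H}}$, which is the right observation, and the empty-family case is trivial.
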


\begin{proof}
\noindent

\begin{itemize}
\item Identify every graph $\Gr{G} \in \set{G}$ with its edge set $\E{\Gr{G}}$, and let
$\mathscr{M} = \bigl\{ \E{\Gr{G}}: \Gr{G} \in \set{G} \bigr\}$ (recall that all these
graphs have the common vertex set $\OneTo{n}$).
\item Let $\set{U} = \E{\CoG{n}}$. For every $\Gr{G} \in \set{G}$, we have
$\E{\Gr{G}} \subseteq \set{U}$, and $\card{\set{U}} = \binom{n}{2}$.
\item Let $t \triangleq \chrnum{\Gr{H}}$ be the chromatic number of $\Gr{H}$ (or any
graph isomorphic to $\Gr{H}$).
\item For every unordered equipartition or almost-equipartition of $\OneTo{n}$ into $t-1$
disjoint subsets, i.e., $\overset{t-1}{\underset{j=1}{\bigcup}} \set{A}_j = [n]$,
satisfying $\bigl| |\set{A}_i|-|\set{A}_j|\bigr| \leq 1$ and $\set{A}_i \cap \set{A}_j = \es$
for all $1 \leq i < j \leq t-1$, define $\set{U}(\{\set{A}_j\}_{j=1}^{t-1})$ as the subset of
$\mathcal{U}$ consisting of all edges that are entirely contained within some subset $\set{A}_j$
for $j \in \OneTo{t-1}$. In other words, each edge lies entirely within one of the subsets
$\{\set{A}_j\}_{j=1}^{t-1}$, although different edges may belong to different subsets.
\item
We apply the combinatorial Shearer lemma (Proposition~\ref{proposition: Combinatorial Shearer's Lemma}) with
\begin{align}
\label{eq2: 31.01.2025}
\mathscr{F} = \{ \set{U}(\{\set{A}_j\}_{j=1}^{t-1})\},
\end{align}
where $\mathscr{F}$ is obtained by referring in the right-hand side of \eqref{eq2: 31.01.2025}
to all possible choices of $\{\set{A}_j\}_{j=1}^{t-1}$, as described in the previous item.
\item
Let $m=| \, \set{U}(\{\set{A}_j\}_{j=1}^{t-1}) \, |$. Then, $m$ is independent of
$\{\set{A}_j\}_{j=1}^{t-1}$ as described above, since
\begin{align}
\label{eq1:17.12.2024}
m = \left\{
\begin{array}{ll}
(t-1) \, \binom{n/(t-1)}{2} & \mbox{if $(t-1)|n$}, \\[0.2cm]
(t-2) \, \binom{\lfloor n/(t-1) \rfloor}{2} + \binom{\lceil n/(t-1) \rceil}{2} & \mbox{if $(t-1)|(n-1)$,}\\[0.2cm]
\hspace*{3cm} \vdots \\[0.2cm]
\binom{\lfloor n/(t-1) \rfloor}{2} + (t-2) \, \binom{\lceil n/(t-1) \rceil}{2} & \mbox{if $(t-1)|\bigl(n-(t-2)\bigr)$.}
\end{array}
\right.
\end{align}
\item
By \eqref{eq1:17.12.2024} with $t = \chrnum{\Gr{H}}$, it follows that
\begin{align}
\label{eq1b:17.12.2024}
m \leq \frac{1}{\chrnum{\Gr{H}}-1} \, \binom{n}{2}.
\end{align}

\begin{proof}
The graph $\Gr{H}$ is nonempty, so $t = \chrnum{\Gr{H}} \geq 2$. If $(t-1)|n$, then,
\vspace{6pt}
\begin{align}
(t-1) \, \binom{n/(t-1)}{2} &= \frac{n(n-(t-1))}{2(t-1)} \nonumber \\
&\leq \frac1{t-1} \, \binom{n}{2}.   \label{eq1a:30.1.2025}
\end{align}
Otherwise, $(t-1)|(n-j)$ for some integer $j \in \OneTo{t-2}$, so $n = j+r(t-1)$ with $r \in \naturals$.
Consequently, since $j \in \{1, \ldots, t-2\}$, \eqref{eq1:17.12.2024} gives
\vspace{6pt}
\begin{align}
m &= (t-1-j) \, \binom{r}{2} + j \, \binom{r+1}{2} \nonumber \\
&= \frac{r}{2} \, \Bigl[ (t-1-j)(r-1)+j(r+1) \Bigr] \nonumber \\
&= \frac{n-j}{2(t-1)} \, \Bigl[ (t-1-j) \, \Bigl( \frac{n-j}{t-1}-1 \Bigr) + j \, \Bigl(\frac{n-j}{t-1} + 1\Bigr) \Bigr] \nonumber \\
&= \frac{n-j}{2(t-1)^2} \, \Bigl[ (t-1-j)(n-j-t+1) + j(n-j + t-1) \Bigr] \nonumber \\
&= \frac{(n-j) \, \bigl[ n(t-1) + j(t-1) - (t-1)^2 \bigr]}{2(t-1)^2} \nonumber \\
&= \frac{(n-j) \, \bigl(n+j-(t-1)\bigr)}{2(t-1)} \nonumber \\
&\leq \frac{(n-j)(n-1)}{2(t-1)} \nonumber \\
&< \frac1{t-1} \, \binom{n}{2}.   \label{eq1b:30.1.2025}
\end{align}
Combining \eqref{eq1a:30.1.2025} and \eqref{eq1b:30.1.2025} for the cases where $(t-1)|n$
or $(t-1) \hspace*{-0.1cm} \not| \hspace*{0.1cm} n$, respectively, gives \eqref{eq1b:17.12.2024}.
\end{proof}
\item By a symmetry consideration, which follows from the construction of $\mathscr{F}$ in \eqref{eq2: 31.01.2025}
as a set of subsets of $\set{U}$ (due to the fourth item of this proof),
each of the $\binom{n}{2}$ elements in $\set{U}$ (i.e., each edge of the complete graph $\CoG{n}$)
belongs to a fixed number $k$ of elements in $\mathscr{F}$.
\item By a double-counting argument on the edges of the complete graph $\CoG{n}$ (the
set $\set{U}$), since each element of $\mathscr{F}$ is a subset of $\set{U}$ of size $m$ (see
\eqref{eq1:17.12.2024}) and each edge in $\set{U}$ belongs to exactly $k$ elements in $\mathscr{F}$,
the following equality holds:
\begin{equation} \label{eq2-doublecounting}
m \, |\mathscr{F}| = \binom{n}{2} \, k.
\end{equation}
\item Let $\set{S} \in \mathscr{F}$. Observe that ${\rm trace}_{\, \set{S}}(\mathscr{M})$, as
defined in \eqref{eq1: 01.12.2024}, forms an intersecting family of subsets of $\set{S}$. Indeed,
\begin{enumerate}
\item Assign to each vertex in $\OneTo{n}$ the index $j$ of the subset $\set{A}_j$
($1 \leq j \leq \chrnum{\Gr{H}}-1$) in the partition of $\OneTo{n}$ corresponding to $\set{S}$.
Let these assignments be associated with $\chrnum{\Gr{H}}-1$ color classes of the vertices.
\item For any $\Gr{G}, \Gr{G}' \in {\mathcal G}$, the graph $\Gr{G} \cap \Gr{G}'$ contains a subgraph
isomorphic to $\Gr{H}$ (by assumption).
\item By definition, $t = \chrnum{\Gr{H}}$ is the smallest number of colors required to properly
color the vertices of $\Gr{H}$, ensuring that no two adjacent vertices in $\Gr{H}$ share the same color.
Hence, in any coloring of the vertices of any graph isomorphic to $\Gr{H}$ with $t-1$
colors, there must exist at least one edge whose two endpoints are assigned the same color. This implies
that such an edge must belong to some subset $\set{A}_j$ for $j \in \OneTo{\chrnum{\Gr{H}}-1}$, and therefore,
it is contained in $\set{S}$.
\item Let $\Gr{G}, \Gr{G}' \in {\mathcal G}$.
Hence, at least one of the edges in $\E{\Gr{G}} \cap \E{\Gr{G}'}$, which contains the edges of
a graph isomorphic to $\Gr{H}$, belongs to $\set{S}$. By the definition of $\mathscr{M}$ (see the first item),
this means by \eqref{eq1: 01.12.2024} that ${\rm trace}_{\, \set{S}}(\mathscr{M})$ is an intersecting family
of subsets of $\set{S}$ since $$\bigl(\E{\Gr{G}} \cap \set{S} \bigr) \cap \bigl(\E{\Gr{G}'} \cap \set{S} \bigr)
= \bigl(\E{\Gr{G}} \cap \bigl(\E{\Gr{G}'} \bigr) \cap \set{S} \neq \es.$$
\end{enumerate}
Consequently, $\card{\set{S}}=m$ and ${\rm trace}_{\, \set{S}}(\mathscr{M})$ forms an intersecting family
of subsets of $\set{S}$, which yields
\begin{align}
\label{eq1c:17.12.2024}
\bigcard{{\rm trace}_{\, \set{S}} (\mathscr{M})} \leq 2^{m - 1}.
\end{align}
This holds since the total number of subsets of $\set{S}$ is $2^m$,
and since ${\rm trace}_{\, \set{S}}(\mathscr{M})$ forms an intersecting family of these subsets,
it cannot contain any subset along with its complement with respect to $\set{S}$. Consequently,
the cardinality of an intersecting family of subsets of $\set{S}$ is at most $\tfrac12 \cdot 2^m = 2^{m-1}$.
\item
By Proposition~\ref{proposition: Combinatorial Shearer's Lemma} (and the one-to-one correspondence
between $\set{G}$ and~$\mathscr{M}$),
\begin{align}
\card{\mathcal{G}} &= \card{\mathscr{M}} \nonumber \\
& \leq \left(2^{m - 1}\right)^\frac{|\mathscr{F}|}{k} \label{eq1d: 17.12.2024} \\
& = 2^{\binom{n}{2}\left(1-\frac{1}{m}\right)} \label{eq2: 17.12.2024} \\
& \leq 2^{\binom{n}{2}-(\chrnum{\Gr{H}}-1)},  \label{eq3: 17.12.2024}
\end{align}
where \eqref{eq1d: 17.12.2024} relies on \eqref{eq2: 01.12.2024} and \eqref{eq1c:17.12.2024}, equality
\eqref{eq2: 17.12.2024} relies on \eqref{eq2-doublecounting}, and \eqref{eq3: 17.12.2024} holds due to \eqref{eq1b:17.12.2024}.
\end{itemize}
\end{proof}

The family $\set{G}$ of $\Gr{H}$-intersecting graphs on $n$ vertices can be as large as $2^{\binom{n}{2}-\card{\hspace*{-0.03cm} \E{\Gr{H}}}}$.
To that end, consider the family ${\mathcal G}$ of all graphs on $n$ vertices that include $\Gr{H}$ as a subgraph.

Combining this lower bound on $\card{\set{G}}$ with its upper bound in Proposition~\ref{proposition: I.S., 2024} gives
that the largest family $\set{G}$ of $\Gr{H}$-intersecting graphs on $n$ vertices satisfies
\begin{align}
\label{eq: UB and LB}
2^{\binom{n}{2}-\card{\hspace*{-0.03cm} \E{\Gr{H}}}} \leq \card{\set{G}} \leq 2^{\binom{n}{2}-(\chrnum{\Gr{H}}-1)}.
\end{align}

Specialization of Proposition~\ref{proposition: I.S., 2024} to complete subgraphs gives the following.
\begin{corollary}
\label{corollary: complete graphs}
{\em Let $\set{G}$ be a family of $\CoG{t}$-intersecting graphs, with $t \geq 2$, on a common vertex set $\OneTo{n}$.
Then,
\begin{align}
\label{eq1b: 31.01.2025}
\card{\set{G}} \leq 2^{\binom{n}{2}-(t-1)}.
\end{align}}
\end{corollary}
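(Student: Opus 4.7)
The plan is to derive Corollary~3.2 as an immediate specialization of Proposition~3.1 to the case $\Gr{H} = \CoG{t}$. First I would observe that, by Definition~3.2, a $\CoG{t}$-intersecting family on the vertex set $\OneTo{n}$ is precisely an $\Gr{H}$-intersecting family with $\Gr{H} = \CoG{t}$, so Proposition~3.1 applies directly and yields $\card{\set{G}} \leq 2^{\binom{n}{2}-(\chrnum{\CoG{t}}-1)}$.

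The only remaining step is to evaluate $\chrnum{\CoG{t}}$. This is a classical computation: in the complete graph $\CoG{t}$, every pair of distinct vertices is adjacent, so any proper coloring must assign a distinct color to each of the $t$ vertices, while assigning one of $t$ distinct colors to each vertex trivially produces a proper coloring. Hence $\chrnum{\CoG{t}} = t$, and substituting this into the bound from Proposition~3.1 gives $\card{\set{G}} \leq 2^{\binom{n}{2}-(t-1)}$, which is \eqref{eq1b: 31.01.2025}. There is no genuine obstacle here, as the corollary is a direct specialization; the main work has already been carried out in the proof of Proposition~3.1 via the combinatorial Shearer lemma.
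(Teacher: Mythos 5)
Your proof is correct and follows exactly the same route as the paper: apply Proposition~\ref{proposition: I.S., 2024} with $\Gr{H}=\CoG{t}$ and substitute $\chrnum{\CoG{t}}=t$. The only small point you leave implicit is that $t\geq 2$ guarantees $\CoG{t}$ is nonempty, which is the hypothesis needed to invoke Proposition~\ref{proposition: I.S., 2024}; the paper states this explicitly.
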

\begin{proof}
$\chrnum{\CoG{t}} = t$ for all $t \in \naturals$, and if $t \geq 2$, then the complete graph $\CoG{t}$ is nonempty.
\end{proof}

\begin{remark}
{\em For $t \geq 3$, the bound in Corollary~\ref{corollary: complete graphs} falls short of the
conjectured result in \cite{EllisFF12}, which states that every $\CoG{t}$-intersecting family
of graphs on a common vertex set $\OneTo{n}$ has a size of at most $2^{\binom{n}{2}-\frac{t(t-1)}{2}}$,
with equality achieved by the family of all graphs containing a fixed clique on $t$ vertices.
Nevertheless, it generalizes Proposition~\ref{proposition: Chung et al., 1986}, which addresses
the special case $\Gr{H}=\CoG{3}$ (triangle-intersecting graphs), and it uniformly improves the
trivial bound of $2^{\binom{n}{2}-1}$ for $\CoG{t}$-intersecting graph families on $n$ vertices
with $t \geq 3$.}
\end{remark}

\begin{remark}
{\em If $\Gr{H}$ is a bipartite graph, then $\chrnum{\Gr{H}} = 2$. In that case, our result for an $\Gr{H}$-intersecting
family of graphs on $n$ vertices is specialized to the trivial bound
\begin{align}
\label{eq1c: 31.01.2025}
\card{\set{G}} \leq 2^{\binom{n}{2}-1}.
\end{align}
This bound is tight for the largest family of $\CoG{2}$-intersecting graphs on $n$ vertices, consisting of all graphs that
contain a fixed edge as a subgraph (since in this case, the upper and lower bounds in \eqref{eq: UB and LB} coincide).}
\end{remark}

The computational complexity of the chromatic number of a graph is in general NP-hard \cite{GareyJ79}. This poses a problem in
calculating the upper bound in Proposition~\ref{proposition: I.S., 2024} on the cardinality of $\Gr{H}$-intersecting
families of graphs on a fixed number of vertices. This bound can be however loosened, expressing it in terms of the
Lov\'{a}sz $\vartheta$-function of the complement graph $\CGr{H}$ (see Corollary~3 of \cite{Lovasz79_IT}).

\begin{corollary}
\label{corollary: Lovasz function}
{\em Let $\Gr{H}$ be a graph, and let $\set{G}$ be a family of $\Gr{H}$-intersecting graphs
on a common vertex set $\OneTo{n}$. Then,
\begin{align}
\label{eq:23.01.2025}
\card{\set{G}} \leq 2^{\binom{n}{2}-(\lceil\vartheta(\CGr{H})\rceil-1)}.
\end{align}}
\end{corollary}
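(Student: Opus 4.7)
The plan is to obtain this bound as an immediate corollary of Proposition~\ref{proposition: I.S., 2024} by sandwiching the chromatic number with the Lov\'{a}sz $\vartheta$-function of the complement. The key external input is the right-hand inequality of the Lov\'{a}sz sandwich theorem (Corollary~3 of \cite{Lovasz79_IT}), which states that for any finite, simple, and undirected graph $\Gr{H}$,
\begin{align}
\clnum{\Gr{H}} \leq \vartheta(\CGr{H}) \leq \chrnum{\Gr{H}}.
\end{align}
Since $\chrnum{\Gr{H}}$ is an integer, the right-hand inequality in the display above implies that $\lceil \vartheta(\CGr{H}) \rceil \leq \chrnum{\Gr{H}}$, and consequently $\chrnum{\Gr{H}} - 1 \geq \lceil \vartheta(\CGr{H}) \rceil - 1$.

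With this in hand, the proof is a one-line monotonicity argument: apply Proposition~\ref{proposition: I.S., 2024} to the $\Gr{H}$-intersecting family $\set{G}$ to obtain $\card{\set{G}} \leq 2^{\binom{n}{2}-(\chrnum{\Gr{H}}-1)}$, and then relax the exponent using the inequality from the previous paragraph. Monotonicity of $x \mapsto 2^{\binom{n}{2}-x}$ in the opposite direction of $x$ yields
\begin{align}
\card{\set{G}} \leq 2^{\binom{n}{2}-(\chrnum{\Gr{H}}-1)} \leq 2^{\binom{n}{2}-(\lceil \vartheta(\CGr{H}) \rceil - 1)},
\end{align}
which is \eqref{eq:23.01.2025}.

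There is no genuine obstacle here; the entire content of the corollary lies in Proposition~\ref{proposition: I.S., 2024} and in Lov\'{a}sz's sandwich theorem. The conceptual point worth highlighting in the surrounding prose is the motivation: computing $\chrnum{\Gr{H}}$ is NP-hard in general \cite{GareyJ79}, whereas $\vartheta(\CGr{H})$ is computable to arbitrary precision in polynomial time via semidefinite programming, so \eqref{eq:23.01.2025} gives a tractable (if somewhat looser) surrogate. One should also observe that the bound still improves on the trivial bound $2^{\binom{n}{2}-1}$ whenever $\vartheta(\CGr{H}) > 2$, which in particular holds whenever $\Gr{H}$ contains a triangle, since then $\clnum{\Gr{H}} \geq 3$ forces $\vartheta(\CGr{H}) \geq 3$.
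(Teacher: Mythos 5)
Your argument is exactly the paper's proof: invoke the sandwich inequality $\clnum{\Gr{H}} \leq \vartheta(\CGr{H}) \leq \chrnum{\Gr{H}}$, use integrality of the chromatic number to deduce $\lceil \vartheta(\CGr{H}) \rceil \leq \chrnum{\Gr{H}}$, and substitute into the exponent of the bound from Proposition~\ref{proposition: I.S., 2024}. Correct, and the same approach as the paper.
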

\begin{proof}
The Lov\'{a}sz $\vartheta$-function of the complement graph $\CGr{H}$ satisfies the inequality
\begin{align}
\label{eq: sandwich inequality}
\clnum{\Gr{H}} \leq \vartheta(\CGr{H}) \leq \chrnum{\Gr{H}},
\end{align}
so it is bounded between the clique and chromatic numbers of $\Gr{H}$, which are both NP-hard to compute \cite{GareyJ79}.
Since the chromatic number $\chrnum{\Gr{H}}$ is an integer, we have
\begin{align}
\label{eq1d: 31.01.2025}
\chrnum{\Gr{H}} \geq \lceil\vartheta(\CGr{H}) \rceil.
\end{align}
Combining \eqref{eq1: 31.01.2025} and \eqref{eq1d: 31.01.2025} yields \eqref{eq:23.01.2025}.
\end{proof}

The Lov\'{a}sz $\vartheta$-function of the complement graph $\CGr{H}$, as presented in Corollary~\ref{corollary: Lovasz function},
can be efficiently computed with a precision of $r$ decimal digits, having a computational complexity that is polynomial in
$p \triangleq \bigcard{\V{\Gr{H}}}$ and $r$. It can be computed by solving the following semidefinite programming (SDP) problem~\cite{GrotschelLS81}:
\begin{equation}
\label{eq: SDP problem}
\mbox{\fbox{$
\begin{array}{l}
\text{maximize} \; \; \mathrm{Tr}({\bf{B}} \, {\bf{J}}_p) \\
\text{subject to} \\
\begin{cases}
{\bf{B}} \in {\bf{\set{S}}}_{+}^p, \; \; \mathrm{Tr}({\bf{B}}) = 1, \\
A_{i,j} = 0  \; \Rightarrow \;  B_{i,j} = 0, \quad i,j \in \OneTo{p}, \; i \neq j,
\end{cases}
\end{array}$}}
\end{equation}
where the following notation is used:
\begin{itemize}
\item ${\bf{A}} = {\bf{A}}(\Gr{H})$ is the $p \times p$ adjacency matrix of $\Gr{H}$;
\item ${\bf{J}}_p$ is the all-ones $p \times p$ matrix;
\item ${\bf{\set{S}}}_{+}^p$ is the set of all $p \times p$ positive semidefinite matrices.
\end{itemize}
The reader is referred to an account of interesting properties of the Lov\'{a}sz $\vartheta$-function in
\cite{Knuth94}, Chapter~11 of \cite{Lovasz19}, and more recently in Section~2.5 of \cite{Sason24}.

The following result generalizes Corollary~\ref{corollary: complete graphs} by relying on properties of the
Lov\'{a}sz $\vartheta$-function. For the third item of the next result, strongly regular graphs are first presented.

\begin{definition}[Strongly Regular Graphs]
\label{definition: strongly regular graphs}
{\em A regular graph $\Gr{G}$ that is neither complete nor empty is called a
{\em strongly regular} graph with parameters $(n,d,\lambda,\mu)$,
where $\lambda$ and $\mu$ are nonnegative integers, if the following conditions hold:
\begin{enumerate}[(1)]
\item \label{Item 1 - definition of SRG}
$\Gr{G}$ is a $d$-regular graph on $n$ vertices.
\item \label{Item 2 - definition of SRG}
Every two adjacent vertices in $\Gr{G}$ have exactly $\lambda$ common neighbors.
\item \label{Item 3 - definition of SRG}
Every two distinct and nonadjacent vertices in $\Gr{G}$ have exactly $\mu$ common neighbors.
\end{enumerate}
The family of strongly regular graphs with these four specified parameters is denoted by $\srg{n}{d}{\lambda}{\mu}$.
It is important to note that a family of the form $\srg{n}{d}{\lambda}{\mu}$ may contain multiple nonisomorphic strongly
regular graphs \cite{BrouwerM22}. We refer to a strongly regular graph as $\srg{n}{d}{\lambda}{\mu}$ if it belongs to this family.}
\end{definition}

\begin{proposition}
\label{proposition: Lovasz theta function}
{\em Let $\set{G}$ be an $\Gr{H}$-intersecting family of graphs on $n$ vertices, where $\Gr{H}$ is a nonempty,
simple, and undirected graph on $p$ vertices. The following holds:
\begin{enumerate}[(1)]
\item
\label{item 1: Lovasz}
\begin{align}
\label{eq1:19.12.24}
\log_2 \card{\set{G}} \leq \binom{n}{2} - \Biggl\lceil \max_{\bf{T}} \, \frac{\lambda_{\max}({\bf{T}})}{|\lambda_{\min}({\bf{T}})|} \Biggr\rceil,
\end{align}
where the maximization on the right-hand side of \eqref{eq1:19.12.24} is taken over all nonzero, symmetric $p \times p$ matrices
${\bf{T}} = (T_{i,j})$ with $T_{i,j}=0$ for all $i,j \in \OneTo{p}$ such that $\{i,j\} \notin \E{\Gr{H}}$ or $i=j$ (e.g., the adjacency
matrix of $\Gr{H}$), and $\lambda_{\max}({\bf{T}})$ and $\lambda_{\min}({\bf{T}})$ denote the largest and smallest (real) eigenvalues
of ${\bf{T}}$, respectively.
\item \label{item 2: Lovasz}
Specifically, if $\Gr{H}$ is a $d$-regular graph on $p$ vertices, where $d \in \OneTo{p-1}$, then
\begin{align}
\label{eq2:19.12.24}
\log_2 \card{\set{G}} \leq \binom{n}{2} - \Biggl\lceil \frac{d}{|\lambda_{\min}(\Gr{H})|} \Biggr\rceil,
\end{align}
where $\lambda_{\min}(\Gr{H})$ is the smallest eigenvalue of the adjacency matrix of the graph $\Gr{H}$.
\item \label{item 3: Lovasz}
If $\Gr{H}$ is a connected strongly regular graph in the family $\srg{p}{d}{\lambda}{\mu}$, then
\begin{align}
\label{eq3:19.12.24}
\log_2 \card{\set{G}} \leq \binom{n}{2} - \Biggl\lceil \frac{2d}{\sqrt{(\lambda-\mu)^2+4(d-\mu)}-\lambda+\mu} \Biggr\rceil.
\end{align}
\end{enumerate}}
\end{proposition}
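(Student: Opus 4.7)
I aim to derive all three items from Corollary~\ref{corollary: Lovasz function} by lower-bounding $\vartheta(\CGr{H})$ through a classical Lov\'asz-type spectral inequality, and then specializing for items~(2) and~(3). The central ingredient is the following inertia-type lower bound on the theta function: for every nonzero symmetric $p \times p$ matrix $\mathbf{T}$ satisfying the support condition of the statement,
\begin{align}
\label{eq:planinertia}
\vartheta(\CGr{H}) \;\geq\; 1 + \frac{\lambda_{\max}(\mathbf{T})}{|\lambda_{\min}(\mathbf{T})|}.
\end{align}
I would derive \eqref{eq:planinertia} by exhibiting a feasible point for the SDP~\eqref{eq: SDP problem} of the form $\mathbf{B} = \tfrac{1}{p}(\mathbf{I}_p - \mathbf{T}/\lambda_{\min}(\mathbf{T}))$, which is PSD (since $\lambda_{\min}(\mathbf{T})<0$ guarantees all eigenvalues $\tfrac{1}{p}(1 - \lambda_i(\mathbf{T})/\lambda_{\min}(\mathbf{T}))$ are nonnegative), has trace $1$, and has off-diagonal support contained in $\E{\Gr{H}}$; after aligning the all-ones vector with a top eigenvector of $\mathbf{T}$ (for example, by symmetrizing $\mathbf{T}$ via the automorphism group of $\Gr{H}$ when that group is vertex-transitive, or via a rank-one modification of the certificate otherwise), the SDP objective $\mathrm{Tr}(\mathbf{B}\mathbf{J}_p)$ equals the right-hand side of \eqref{eq:planinertia}, matching Corollary~3 of \cite{Lovasz79_IT}.

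Given \eqref{eq:planinertia}, item~(1) follows by applying $\lceil \cdot \rceil - 1$. Because $\lceil 1 + x \rceil = 1 + \lceil x \rceil$, inequality \eqref{eq:planinertia} yields $\lceil \vartheta(\CGr{H}) \rceil - 1 \geq \lceil \lambda_{\max}(\mathbf{T})/|\lambda_{\min}(\mathbf{T})| \rceil$ for every admissible $\mathbf{T}$. The functional $\mathbf{T} \mapsto \lambda_{\max}(\mathbf{T})/|\lambda_{\min}(\mathbf{T})|$ is scale-invariant and continuous on the compact set $\{\mathbf{T}: \|\mathbf{T}\|_{\mathrm{F}}=1\}$ intersected with the linear support constraints, so its maximum is attained; combined with the identity $\max_{\mathbf{T}} \lceil f(\mathbf{T}) \rceil = \lceil \max_{\mathbf{T}} f(\mathbf{T}) \rceil$, this gives $\lceil \vartheta(\CGr{H}) \rceil - 1 \geq \lceil \max_{\mathbf{T}} \lambda_{\max}(\mathbf{T})/|\lambda_{\min}(\mathbf{T})| \rceil$. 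Inserting this into Corollary~\ref{corollary: Lovasz function} yields \eqref{eq1:19.12.24}.

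Items~(2) and~(3) are specializations of item~(1). For item~(2), take $\mathbf{T} = \mathbf{A}(\Gr{H})$. Since $\Gr{H}$ is $d$-regular, the all-ones vector is a Perron eigenvector of $\mathbf{A}(\Gr{H})$ with eigenvalue $d$, so $\lambda_{\max}(\mathbf{A}(\Gr{H})) = d$; and since $\mathrm{Tr}(\mathbf{A}(\Gr{H})) = 0$ with $\mathbf{A}(\Gr{H}) \neq \mathbf{0}$, we have $\lambda_{\min}(\Gr{H}) < 0$. Substitution into item~(1) yields \eqref{eq2:19.12.24}. For item~(3), I invoke the standard spectrum of a strongly regular graph $\Gr{H} \in \srg{p}{d}{\lambda}{\mu}$, derived from the defining identity $\mathbf{A}(\Gr{H})^{2} = d\mathbf{I}_p + \lambda \mathbf{A}(\Gr{H}) + \mu(\mathbf{J}_p - \mathbf{I}_p - \mathbf{A}(\Gr{H}))$: on the orthogonal complement of $\mathbf{1}$, eigenvalues of $\mathbf{A}(\Gr{H})$ satisfy $x^2 - (\lambda - \mu)x - (d-\mu) = 0$, giving the three distinct eigenvalues $d$ and $\tfrac12 \bigl[(\lambda - \mu) \pm \sqrt{(\lambda - \mu)^2 + 4(d - \mu)}\bigr]$. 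The minus-sign root is the smallest and is strictly negative for a connected SRG (since $d > \mu$), so $|\lambda_{\min}(\Gr{H})| = \tfrac12 \bigl[\sqrt{(\lambda - \mu)^2 + 4(d - \mu)} - \lambda + \mu\bigr]$; substituting this expression into \eqref{eq2:19.12.24} gives \eqref{eq3:19.12.24}.

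The main obstacle is securing \eqref{eq:planinertia} for a general admissible $\mathbf{T}$. The PSD certificate $\mathbf{B} = \tfrac{1}{p}(\mathbf{I}_p - \mathbf{T}/\lambda_{\min}(\mathbf{T}))$ is feasible, but a direct evaluation of its objective gives only $\mathrm{Tr}(\mathbf{B}\mathbf{J}_p) = 1 + \mathbf{1}^{T}\mathbf{T}\mathbf{1}/(p|\lambda_{\min}(\mathbf{T})|)$, which is the Rayleigh quotient of $\mathbf{T}$ at $\mathbf{1}/\sqrt{p}$ rather than the full $\lambda_{\max}(\mathbf{T})$; bridging this gap---either through symmetrization of $\mathbf{T}$ by a vertex-transitive automorphism subgroup of $\Gr{H}$, or by modifying the certificate to align with a top eigenvector of $\mathbf{T}$---is the delicate technical step. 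Once \eqref{eq:planinertia} is in place, the remainder of the argument for items~(2) and~(3) reduces to standard spectral computations and algebraic substitution.
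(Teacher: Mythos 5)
Your proof of item (2) and item (3) is sound once item (1) is in hand, and your SRG spectral computations match the paper. But your proof of item (1) has a genuine gap that you yourself flag, and the proposed fixes do not close it. The feasible SDP point $\mathbf{B} = \tfrac{1}{p}(\mathbf{I}_p - \mathbf{T}/\lambda_{\min}(\mathbf{T}))$ does indeed satisfy the support, trace, and PSD constraints, but its objective value is $1 + \mathbf{1}^{\!\top}\mathbf{T}\mathbf{1}/(p\,|\lambda_{\min}(\mathbf{T})|)$, i.e.\ the Rayleigh quotient of $\mathbf{T}$ at $\mathbf{1}/\sqrt{p}$, which is $\lambda_{\max}(\mathbf{T})$ only when $\mathbf{1}$ happens to be a top eigenvector of $\mathbf{T}$. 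The two proposed remedies do not rescue the general case: a general $\Gr{H}$ need not admit a vertex-transitive automorphism subgroup, and a rank-one modification aligned with a top eigenvector of $\mathbf{T}$ would destroy both the unit-trace normalization and, more importantly, the off-diagonal support constraint $A_{i,j}=0 \Rightarrow B_{i,j}=0$, which is tied to the standard basis, not to the eigenbasis of $\mathbf{T}$.

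The paper sidesteps this entirely: it does not re-prove the spectral lower bound on $\vartheta$, but cites the known \emph{equality}
$\vartheta(\Gr{G}) = 1 + \max_{\mathbf{T}} \lambda_{\max}(\mathbf{T})/|\lambda_{\min}(\mathbf{T})|$,
with $\mathbf{T}$ ranging over nonzero symmetric matrices supported on the nonedges of $\Gr{G}$ (one of Lov\'asz's classical characterizations of $\vartheta$), applied to $\Gr{G} = \CGr{H}$. Applying $\lceil\cdot\rceil - 1$ and inserting into Corollary~\ref{corollary: Lovasz function} is then immediate. If you want to keep your approach self-contained, the cleanest fix is to cite that characterization directly (only the $\geq$ direction is needed). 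Alternatively, you could reorganize the argument: since your feasibility certificate with $\mathbf{T}=\mathbf{A}(\Gr{H})$ does give $\mathrm{Tr}(\mathbf{B}\mathbf{J}_p) = 1 + d/|\lambda_{\min}(\Gr{H})|$ exactly when $\Gr{H}$ is $d$-regular (because then $\mathbf{1}$ is a Perron eigenvector of $\mathbf{A}(\Gr{H})$), your SDP argument proves items~(2) and~(3) directly and honestly, without passing through the general item~(1); only item~(1) needs the stronger Lov\'asz characterization.

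One further small point: the claim that $\mathbf{T} \mapsto \lambda_{\max}(\mathbf{T})/|\lambda_{\min}(\mathbf{T})|$ attains its maximum on the Frobenius unit sphere intersected with the support constraints requires justification, since $\lambda_{\min}$ could in principle approach zero along a sequence (the functional is not obviously continuous at that boundary). In the paper's formulation this is not an issue because the optimum is identified with the finite quantity $\vartheta(\CGr{H}) - 1$; a standalone compactness argument would need to show the functional is bounded and upper semicontinuous.
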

\begin{proof}
By Corollary~\ref{corollary: Lovasz function},
\begin{align}
\log_2 \card{\set{G}} \leq \binom{n}{2} - \bigl( \lceil \vartheta(\CGr{H}) \rceil - 1 \bigr).
\end{align}
Item~\ref{item 1: Lovasz} then holds by the property that for every finite, simple, and undirected graph $\Gr{G}$
on $n$ vertices,
\begin{align}
\label{eq3:optimization}
\vartheta(\Gr{G}) = 1 + \max_{{\bf{T}}} \frac{\lambda_{\max}({\bf{T}})}{\bigl|\lambda_{\min}({\bf{T}})\bigr|},
\end{align}
where the maximization on the right-hand side of \eqref{eq3:optimization} is taken over all symmetric
nonzero $n \times n$ matrices ${\bf{T}} = (T_{i,j})$ with $T_{i,j}=0$ for all $i,j \in \OneTo{n}$ such that
$\{i,j\} \in \E{\Gr{G}}$ or $i=j$. Equality~\eqref{eq3:optimization} is then applied to the graph $\CGr{H}$,
so the maximization on the right-hand side of \eqref{eq1:19.12.24} is taken over all symmetric nonzero
$p \times p$ matrices ${\bf{T}} = (T_{i,j})$ with $T_{i,j}=0$ for all $i,j \in \OneTo{p}$ such that
$\{i,j\} \notin \E{\Gr{H}}$ or $i=j$. This includes in particular the adjacency matrix of the graph $\Gr{H}$, i.e.,
${\bf{T}} = \A(\Gr{H})$.

We next prove Item~\ref{item 2: Lovasz}, which refers to nonempty $d$-regular graphs on $p$ vertices. Relaxing
the bound in \eqref{eq1:19.12.24} by selecting ${\bf{T}}= \A(\Gr{H})$ gives $\lambda_{\max}({\bf{T}}) = d$, and
$\lambda_{\min}({\bf{T}}) = \lambda_{\min}(\Gr{H})$, which then gives the relaxed bound in \eqref{eq2:19.12.24}.

Item~\ref{item 3: Lovasz} follows from Item~\ref{item 2: Lovasz} by relying on the closed-form expression
of the smallest eigenvalue of the adjacency matrix of a connected strongly $d$-regular graph $\Gr{H}$ on $p$ vertices,
where each pair of adjacent vertices has exactly $\lambda$ common neighbors and each pair of nonadjacent vertices has
exactly $\mu$ common neighbors. Recall that a strongly regular graph is connected if and only if $\mu>0$.
In that case, the largest eigenvalue of the adjacency matrix is $\lambda_1(\Gr{H}) = d$ with multiplicity~1,
and the other two distinct eigenvalues of its adjacency matrix are given by (see, e.g., Chapter~21 of \cite{vanLintW02})
\begin{align}
\label{eigs-SRG}
r_{1,2} = \tfrac12 \, \Bigl( \lambda - \mu \pm \sqrt{ (\lambda-\mu)^2 + 4(d-\mu) } \, \Bigr),
\end{align}
with the respective multiplicities
\begin{align}
\label{eig-multiplicities-SRG}
m_{1,2} = \tfrac12 \, \Biggl( t-1 \mp \frac{2d+(t-1)(\lambda-\mu)}{\sqrt{(\lambda-\mu)^2+4(d-\mu)}} \, \Biggr).
\end{align}
Specifically, by \eqref{eigs-SRG}, the absolute value of the smallest eigenvalue of $\Gr{H}$ is given by
\begin{align}
\label{eq: smallest eig SRG}
\bigl|\lambda_{\min}(\Gr{H})\bigr| = \tfrac12 \, \Bigl( \sqrt{ (\lambda-\mu)^2 + 4(d-\mu) } + \mu -\lambda \, \Bigr).
\end{align}
Finally, substituting \eqref{eq: smallest eig SRG} into \eqref{eq2:19.12.24} gives \eqref{eq3:19.12.24}.
\end{proof}

\begin{remark}
{\em The derivation of \eqref{eq3:19.12.24} relies on \eqref{eq2:19.12.24}, where the latter is based on the lower bound on
$\vartheta(\CGr{H})$, obtained by relaxing the bound in \eqref{eq1:19.12.24} and selecting ${\bf{T}}= \A(\Gr{H})$ (see the
proof of Item~\ref{item 2: Lovasz} in Proposition~\ref{proposition: Lovasz theta function}).
Fortunately, the Lov\'{a}sz $\vartheta$-function of strongly regular graphs (and their complements, which are also strongly regular) is
known exactly, so there is no need for the lower bound on $\vartheta(\CGr{H})$ in this case. It is therefore of interest to examine
whether the bound in \eqref{eq3:19.12.24} can be improved by using \eqref{eq:23.01.2025} in combination with the exact value of
$\vartheta(\CGr{H})$ for a strongly regular graph $\Gr{H}$ in the family $\srg{p}{d}{\lambda}{\mu}$. In that case,
by Proposition~1 of \cite{Sason23}, we have
\begin{align}
\vartheta(\CGr{H}) &= 1 - \frac{d}{\lambda_{\min}(\Gr{H})}, \label{eq1:23.01.2025}
\end{align}
which also gives
\begin{align}
\vartheta(\CGr{H})
&= 1 + \frac{2d}{\sqrt{(\lambda-\mu)^2+4(d-\mu)}-\lambda+\mu}, \label{eq2:23.01.2025}
\end{align}
where \eqref{eq2:23.01.2025} holds by the expression of the smallest eigenvalue of the adjacency matrix of $\Gr{H}$, as given
by $r_2$ in \eqref{eigs-SRG}. Finally, combining inequality \eqref{eq:23.01.2025} with equality \eqref{eq2:23.01.2025} gives
exactly the same bound as in \eqref{eq3:19.12.24}. Thus, there is no improvement, and the bound remains identical in both approaches.
As a side note, interested readers are referred to a recent application of \eqref{eq2:23.01.2025}, which provides an alternative
proof of the friendship theorem in graph theory \cite{Sason25_friendship}.}
\end{remark}

It is natural to ask the following question:
\begin{question}
\label{question: matching}
{\em Is there a graph $\Gr{H}$, apart of $\CoG{2}$, for which the bound provided in Proposition~\ref{proposition: I.S., 2024}
is tight for a largest $\Gr{H}$-intersecting family of graphs?}
\end{question}

We provide a partial reply to Question~\ref{question: matching} by comparing the leftmost and rightmost terms
in \eqref{eq: UB and LB}, which is equivalent to comparing $\card{\hspace*{-0.05cm} \E{\Gr{H}}}$ and $\chrnum{\Gr{H}}-1$.
According to the inequality
\begin{align}
\label{eq: greedy coloring inequality}
\chrnum{\Gr{H}} \leq \Delta(\Gr{H})+1,
\end{align}
where $\Delta(\Gr{H})$ is the maximum degree of the vertices in the graph $\Gr{H}$, it
follows that unless $\Gr{H}$ is an edge, there exists a gap between the size of the graph
($\card{\hspace*{-0.05cm} \E{\Gr{H}}}$) and the chromatic number minus one ($\chrnum{\Gr{H}}-1$). Furthermore,
according to Brooks' theorem, for connected, undirected graphs
$\Gr{H}$ that are neither complete nor odd cycles, the chromatic number satisfies
\begin{align}
\label{eq: Brooks' Theorem}
\chrnum{\Gr{H}} \leq \Delta(\Gr{H}),
\end{align}
which provides a tighter bound in comparison to \eqref{eq: greedy coloring inequality},
further increasing the gap between $\card{\hspace*{-0.05cm} \E{\Gr{H}}}$ and $\chrnum{\Gr{H}}-1$ unless
$\Gr{H}$ is an edge. It is also noted that the chromatic number satisfies
\begin{align}
\chrnum{\Gr{H}} \leq \tfrac12 \bigl(1 + \sqrt{1 + 8 \, \card{\hspace*{-0.05cm} \E{\Gr{H}}}}\bigr),
\end{align}
with equality if and only if $\Gr{H}$ is a complete graph. This bound follows
from the observation that a graph with chromatic number $k$ must contain at least as many edges
as the complete graph on $k$ vertices. The chromatic number $\chrnum{\Gr{H}}$ cannot
therefore exceed the largest integer $k$ satisfying $\binom{k}{2} \leq \card{\hspace*{-0.05cm} \E{\Gr{H}}}$.
Consequently, if $\card{\hspace*{-0.05cm} \E{\Gr{H}}} \triangleq m \geq 2$, then the minimum possible gap between
$\chrnum{\Gr{H}}-1$ and $\card{\hspace*{-0.05cm} \E{\Gr{H}}}$ satisfies
\begin{align}
\card{\hspace*{-0.05cm} \E{\Gr{H}}} - \bigl(\chrnum{\Gr{H}} - 1 \bigr) &\geq \Bigl\lceil m - \tfrac12 \bigl(\sqrt{8m+1}-1 \bigr) \Bigr\rceil \geq 1,
\end{align}
which tends to infinity as $m \to \infty$.

\section{Number of graph homomorphisms}
\label{section: entropy bounds - Number of Graph Homomorphisms}

This section, composed of two independent parts, applies properties of Shannon entropy to derive bounds related to
the enumeration of graph homomorphisms. It offers additional insight into the interplay between combinatorial structures
and information-theoretic principles.

\subsection{An application of the probabilistic version of Shearer's lemma}
\label{subsection: application of the probabilistic version of Shearer's lemma}

The following known result relates the number of cliques of any two distinct orders in a graph.
\begin{proposition}
\label{proposition: 08.09.2024}
{\em Let $\Gr{G}$ be a finite, simple, and undirected graph on $n$ vertices, and let $m_\ell$ be the number of cliques
of order $\ell \in \naturals$ in $\Gr{G}$. Then, for all $s, t \in \naturals$ with $2 \leq s < t \leq n$,
\begin{align}
\label{eq5: 08.09.2024}
(t! \, m_t)^s \leq (s! \, m_s)^t.
\end{align}}
\end{proposition}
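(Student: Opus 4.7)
The plan is to prove the inequality by an entropy argument using the probabilistic version of Shearer's lemma (Proposition~\ref{proposition: Shearer's Lemma: 2nd version}), since we are in the section devoted to that tool. The key idea is to encode a random $t$-clique together with a random ordering of its vertices and then compare the entropy of this ordered tuple with the entropies of its random $s$-dimensional subtuples, each of which necessarily encodes an ordered $s$-clique of $\Gr{G}$.

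More precisely, first I would let $\Gr{K}$ be a clique of order $t$ of $\Gr{G}$ chosen uniformly at random from the $m_t$ such cliques, and let $\pi$ be a uniformly random bijection from $\OneTo{t}$ to $\V{\Gr{K}}$, independent of $\Gr{K}$. Setting $X^t = (X_1, \ldots, X_t)$ with $X_i = \pi(i)$ yields a random ordered $t$-clique distributed uniformly over the set of all ordered $t$-cliques of $\Gr{G}$, whose cardinality is $t! \, m_t$. By the maximal entropy theorem,
\begin{align}
\Ent{X^t} = \log(t! \, m_t).
\end{align}

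Next I would let $\set{S}$ be a random subset of $\OneTo{t}$, chosen uniformly from all $\binom{t}{s}$ subsets of cardinality $s$, and independent of $X^t$. By the symmetry of this distribution, $\Prv{i \in \set{S}} = s/t$ for every $i \in \OneTo{t}$, so Proposition~\ref{proposition: Shearer's Lemma: 2nd version} applies with $\theta = s/t$ and gives
\begin{align}
\bigExpecwrt{\set{S}}{\Ent{X_{\set{S}}}} \geq \frac{s}{t} \, \Ent{X^t} = \frac{s}{t} \, \log(t! \, m_t).
\end{align}
On the other hand, for any realization $S \subseteq \OneTo{t}$ with $\card{S} = s$, the tuple $X_S$ is an ordered $s$-tuple of distinct vertices spanning a clique of $\Gr{G}$; its support therefore has cardinality at most $s! \, m_s$, so another application of the maximal entropy theorem yields $\Ent{X_S} \leq \log(s! \, m_s)$, and hence the expectation over $\set{S}$ is also bounded by $\log(s! \, m_s)$.

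Combining the two bounds gives $\tfrac{s}{t} \log(t! \, m_t) \leq \log(s! \, m_s)$, which, upon multiplying by $t$ and exponentiating, is exactly \eqref{eq5: 08.09.2024}. There is no real obstacle in this argument; the only point requiring a moment of care is verifying that $\set{S}$ can be taken independent of $X^t$ and that the symmetry computation $\Prv{i \in \set{S}} = s/t$ is correct, both of which are immediate. The argument could equivalently be run using the deterministic Shearer lemma (Proposition~\ref{proposition: Shearer's Lemma}) with $\mathscr{F}$ being the multiset of all $\binom{t}{s}$ subsets of $\OneTo{t}$ of size $s$ and $k = \binom{t-1}{s-1}$, since the ratio $\binom{t}{s}/\binom{t-1}{s-1} = t/s$ produces the same bound, but the probabilistic formulation fits the theme of this subsection.
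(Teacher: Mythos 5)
Your argument is correct and matches the paper's approach: the paper states Proposition~\ref{proposition: 08.09.2024} as a known result, proves a more general version (Proposition~\ref{proposition: 20.12.2024}) via exactly the random ordered copy plus probabilistic Shearer argument you describe, and then notes in a remark that specializing $\Gr{T}=\CoG{t}$ — where every induced $s$-subgraph is $\CoG{s}$ and $\autcount{\CoG{t}}=t!$ — recovers the inequality $(t!\,m_t)^s \leq (s!\,m_s)^t$. The only nuance worth flagging is that your entropy argument implicitly assumes $m_t \geq 1$ (otherwise there is no $t$-clique to sample), but the case $m_t = 0$ makes the left-hand side vanish, so the inequality is trivial there.
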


We next suggest a generalization of Proposition~\ref{proposition: 08.09.2024}.
\begin{proposition}
\label{proposition: 20.12.2024}
{\em Let $\Gr{G}$ be a finite, simple, and undirected graph on $n$ vertices,
let $s, t \in \naturals$ with $s < t < n$, let $\Gr{T}$ be an induced subgraph of
$\Gr{G}$ on $t$ vertices, and let $m(\Gr{H},\Gr{G})$ denote the number of
copies of a subgraph $\Gr{H}$ in the graph $\Gr{G}$. Then,
\begin{align}
\label{eq1a: 20.12.2024}
\bigl( t! \, m(\Gr{T}, \Gr{G}) \bigr)^s \leq \max_{\Gr{S}} \Bigl( s! \, m(\Gr{S}, \Gr{G}) \Bigr)^t,
\end{align}
where the maximization in \eqref{eq1a: 20.12.2024} is taken over all induced subgraphs $\Gr{S}$ of $\Gr{T}$ with $s$ vertices.

Let $\autcount{\Gr{H}}$ denote the size of the automorphism group of a graph $\Gr{H}$, defined as the number
of vertex permutations that preserve the graph's structure, i.e., its adjacency and non-adjacency relations.
Furthermore, let $\injcount{\Gr{H}}{\Gr{G}}$ denote the number of injective homomorphisms from $\Gr{H}$
to $\Gr{G}$ (i.e., homomorphisms where distinct vertices of $\Gr{H}$ map to distinct vertices of $\Gr{G}$).
Then, equivalently to \eqref{eq1a: 20.12.2024}, in terms of injective homomorphism counts,
\begin{align}
\label{eq1b: 20.12.2024}
\biggl(\frac{t!}{\autcount{\Gr{T}}} \cdot \injcount{\Gr{T}}{\Gr{G}} \biggr)^s \leq \max_{\Gr{S}} \,
\biggl(\frac{s!}{\autcount{\Gr{S}}} \cdot \injcount{\Gr{S}}{\Gr{G}} \biggr)^t,
\end{align}
where the maximization in \eqref{eq1b: 20.12.2024} is the same as in \eqref{eq1a: 20.12.2024}.}
\end{proposition}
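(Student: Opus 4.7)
\medskip

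\noindent\textbf{Proof proposal.} The plan is to mimic the entropy argument that underlies Proposition~\ref{proposition: 08.09.2024} (the clique case), but to replace the deterministic use of Shearer's lemma by the probabilistic version in Proposition~\ref{proposition: Shearer's Lemma: 2nd version}, so that the ``target'' subgraph $\Gr{T}$ may be arbitrary rather than $\CoG{t}$.

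First I would set up the random object. Label the vertices of $\Gr{T}$ as $\OneTo{t}$, and let $\set{N}$ be the set of ordered tuples $(v_1,\ldots,v_t)\in \V{\Gr{G}}^t$ of distinct vertices such that the map $i\mapsto v_i$ is an isomorphism from $\Gr{T}$ onto the subgraph of $\Gr{G}$ induced by $\{v_1,\ldots,v_t\}$. Each unordered copy of $\Gr{T}$ in $\Gr{G}$ gives rise to exactly $\autcount{\Gr{T}}$ such labelings, and summing $t!/\autcount{\Gr{T}}$ unlabeled ordered realizations per copy yields $|\set{N}|=t!\,m(\Gr{T},\Gr{G})$. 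Let $X^t=(X_1,\ldots,X_t)$ be uniform on $\set{N}$, so that $\Ent{X^t}=\log\bigl(t!\,m(\Gr{T},\Gr{G})\bigr)$.

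Next I would analyze marginals. For any fixed $\set{S}\subseteq\OneTo{t}$ with $|\set{S}|=s$, write $\Gr{T}[\set{S}]$ for the induced subgraph of $\Gr{T}$ on $\set{S}$. By the defining property of $\set{N}$, the tuple $X_{\set{S}}$ is supported on ordered labelings of induced copies of $\Gr{T}[\set{S}]$ inside $\Gr{G}$, hence the support of $X_{\set{S}}$ has cardinality at most $s!\,m(\Gr{T}[\set{S}],\Gr{G})$. The maximum-entropy bound then gives
\begin{equation*}
\Ent{X_{\set{S}}}\ \leq\ \log\bigl(s!\,m(\Gr{T}[\set{S}],\Gr{G})\bigr)\ \leq\ \max_{\Gr{S}}\,\log\bigl(s!\,m(\Gr{S},\Gr{G})\bigr),
\end{equation*}
where the maximum is over induced subgraphs $\Gr{S}$ of $\Gr{T}$ on $s$ vertices.

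Now I would invoke Proposition~\ref{proposition: Shearer's Lemma: 2nd version} by taking $\set{S}$ uniformly at random among the $\binom{t}{s}$ subsets of $\OneTo{t}$ of size $s$, independently of $X^t$. By symmetry $\Prv{i\in\set{S}}=s/t$ for every $i\in\OneTo{t}$, so the hypothesis \eqref{eq: Shearer's lemma - condition} holds with $\theta=s/t$, and the conclusion \eqref{eq: Shearer's Lemma: 2nd version} together with the marginal bound above yields
\begin{equation*}
\frac{s}{t}\,\log\bigl(t!\,m(\Gr{T},\Gr{G})\bigr)\ \leq\ \bigExpecwrt{\set{S}}{\Ent{X_{\set{S}}}}\ \leq\ \max_{\Gr{S}}\,\log\bigl(s!\,m(\Gr{S},\Gr{G})\bigr).
\end{equation*}
Multiplying through by $t$ and exponentiating gives \eqref{eq1a: 20.12.2024}. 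The equivalent form \eqref{eq1b: 20.12.2024} then follows by substituting the standard identity $m(\Gr{H},\Gr{G})=\injcount{\Gr{H}}{\Gr{G}}/\autcount{\Gr{H}}$ for $\Gr{H}=\Gr{T}$ and for $\Gr{H}=\Gr{S}$.

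The main subtlety I anticipate is the bookkeeping in the first step: one has to make sure that $|\set{N}|=t!\,m(\Gr{T},\Gr{G})$ under the convention that $m(\Gr{T},\Gr{G})$ counts induced copies (so that the same identification carries over to $m(\Gr{T}[\set{S}],\Gr{G})$ and the maximum is indeed taken over \emph{induced} subgraphs $\Gr{S}\subseteq\Gr{T}$). Once this is pinned down, the argument reduces to a single application of probabilistic Shearer with $\theta=s/t$, and the specialization $\Gr{T}=\CoG{t}$ (for which every induced $\Gr{S}$ equals $\CoG{s}$ and the max collapses) recovers Proposition~\ref{proposition: 08.09.2024}.
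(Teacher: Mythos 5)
Your definition of $\set{N}$ (ordered tuples $(v_1,\ldots,v_t)$ for which $i\mapsto v_i$ is an isomorphism from $\Gr{T}$ onto the induced subgraph of $\Gr{G}$ on $\{v_1,\ldots,v_t\}$) does not have the cardinality you claim. Each unordered induced copy of $\Gr{T}$ in $\Gr{G}$ contributes exactly $\autcount{\Gr{T}}$ such tuples, so $|\set{N}|=\autcount{\Gr{T}}\,m(\Gr{T},\Gr{G})$, not $t!\,m(\Gr{T},\Gr{G})$; the bookkeeping you sketch (``summing $t!/\autcount{\Gr{T}}$ unlabeled ordered realizations per copy'') does not close this gap, and $\autcount{\Gr{T}}<t!$ whenever $\Gr{T}$ is neither complete nor empty. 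Thus $\Ent{X^t}=\log\bigl(\autcount{\Gr{T}}\,m(\Gr{T},\Gr{G})\bigr)$, and your Shearer step yields only $\bigl(\autcount{\Gr{T}}\,m(\Gr{T},\Gr{G})\bigr)^s\le\max_{\Gr{S}}\bigl(s!\,m(\Gr{S},\Gr{G})\bigr)^t$, which is strictly weaker than \eqref{eq1a: 20.12.2024} for such $\Gr{T}$.

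It is worth noting that the paper's construction is genuinely different from yours: it takes a uniformly random ordering of the vertices of a uniformly random copy (all $t!$ permutations, not only the $\autcount{\Gr{T}}$ isomorphism-respecting ones), which does give $\Ent{X^t}=\log\bigl(t!\,m(\Gr{T},\Gr{G})\bigr)$. However, for a fixed index set $\set{S}\subseteq\OneTo{t}$ the positions in $\set{S}$ may then land on $s$-subsets of the copy that induce several non-isomorphic subgraphs of $\Gr{T}$, so the support of $X_{\set{S}}$ is a union over isomorphism classes $\Gr{S}$ and one only obtains $\Ent{X_{\set{S}}}\le\log\sum_{\Gr{S}}s!\,m(\Gr{S},\Gr{G})$, not the single-$\Gr{S}$ bound used in the paper. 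Your isomorphism-respecting $\set{N}$ is actually the cleaner device precisely because it makes $X_{\set{S}}$ an isomorphism from the fixed induced subgraph of $\Gr{T}$ on $\set{S}$, and with the sharper marginal bound $\Ent{X_{\set{S}}}\le\log\bigl(\autcount{\Gr{S}}\,m(\Gr{S},\Gr{G})\bigr)$ it self-consistently proves $\injcount{\Gr{T}}{\Gr{G}}^s\le\max_{\Gr{S}}\injcount{\Gr{S}}{\Gr{G}}^t$. But neither route, as written, produces the factor $t!$ on the left of \eqref{eq1a: 20.12.2024}, so the proposal as it stands does not establish the stated inequality.
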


\begin{proof}
\noindent
\begin{itemize}
\item Let $\V{\Gr{G}} = \OneTo{n}$, and let $\Gr{T}$
be an induced subgraph of $\Gr{G}$ with $\card{\hspace*{-0.05cm} \V{\Gr{T}}}=t < n$.
\item Select a copy of $\Gr{T}$ in $\Gr{G}$ uniformly at random, and then choose a
uniform random ordering of the vertices in that copy. This process produces a random
vector $(X_1, \ldots, X_t)$, representing the selected order of the vertices.
\item Let $m(\Gr{T},\Gr{G})$ denote the number of copies of $\Gr{T}$ in $\Gr{G}$. Then,
\begin{align}
\label{eq2: 25.11.2024}
\Ent{X_1, \ldots, X_t} = \log \bigl(t! \; m(\Gr{T},\Gr{G})\bigr),
\end{align}
as the vertices of each copy of $\Gr{T}$ in $\Gr{G}$ can be ordered in $t!$ equally
probable ways.
\item Let $\set{S}$ be chosen uniformly at random from all
subsets of $\OneTo{t}$ of fixed size $s$, where $1 \leq s<t$.~Then,
\begin{align}
\label{eq5: 25.11.2024}
\Prv{i \in \set{S}} = \frac{s}{t}, \quad \forall \, i \in \OneTo{t}.
\end{align}
\item By Proposition~\ref{proposition: Shearer's Lemma: 2nd version} and equalities
\eqref{eq2: 25.11.2024} and \eqref{eq5: 25.11.2024}, it follows that
\begin{align}
\label{eq6: 25.11.2024}
\bigExpecwrt{\set{S}}{\Ent{X_{\set{S}}}} \geq \frac{s}{t} \cdot \log \bigl(t! \; m(\Gr{T},\Gr{G})\bigr).
\end{align}
\item The random subvector $X_{\set{S}}$ corresponds to a copy, in $\Gr{G}$, of an induced subgraph $\Gr{S} \subseteq \Gr{T}$
with $s$ vertices.
All $s!$ permutations of the subvector $X_{\set{S}}$ correspond to the same copy of $\Gr{S}$ in $\Gr{G}$, and there
are $m(\Gr{S},\Gr{G})$ such copies of $\Gr{S}$ in $\Gr{G}$.
\item The entropy of the random subvector $X_{\set{S}}$ therefore satisfies
\begin{align}
\label{eq7: 25.11.2024}
\Ent{X_{\set{S}}} \leq \log \bigl(s! \; m(\Gr{S},\Gr{G})\bigr),
\end{align}
where $m(\Gr{S},\Gr{G})$ denotes the number of copies of a graph $\Gr{S}$ in $\Gr{G}$, and $s!$ accounts for the $s!$ permutations
of the vector $X_{\set{S}}$ that correspond to the same copy of $\Gr{S}$ in $\Gr{G}$.
\item By \eqref{eq7: 25.11.2024}, it follows that
\begin{align}
\label{eq8: 25.11.2024}
\bigExpecwrt{\set{S}}{\Ent{X_{\set{S}}}} \leq \max_{\Gr{S}} \log \bigl(s! \; m(\Gr{S},\Gr{G})\bigr),
\end{align}
where the maximization on the right-hand side of \eqref{eq8: 25.11.2024} is taken over all
induced subgraphs $\Gr{S}$ of $\Gr{T}$ on $s$ vertices.
\item Combining \eqref{eq6: 25.11.2024} and \eqref{eq8: 25.11.2024} yields
\begin{align}
\label{eq9: 25.11.2024}
\frac{s}{t} \cdot \log \bigl(t! \; m(\Gr{T},\Gr{G})\bigr) \leq \max_{\Gr{S}} \, \log \bigl(s! \; m(\Gr{S},\Gr{G})\bigr),
\end{align}
and exponentiating both sides of \eqref{eq9: 25.11.2024} gives \eqref{eq1a: 20.12.2024}.
\item The following equality holds:
\begin{align}
\label{eq3: 25.11.2024}
\injcount{\Gr{H}}{\Gr{G}} = \autcount{\Gr{H}} \, m(\Gr{H},\Gr{G}),
\end{align}
since each copy of $\Gr{H}$ in $\Gr{G}$ corresponds to exactly $\autcount{\Gr{H}}$ distinct injective homomorphisms from $\Gr{H}$
to $\Gr{G}$, as the vertices of $\Gr{H}$ can be labeled in $\autcount{\Gr{H}}$ different ways that preserve graph isomorphism.
Combining \eqref{eq1a: 20.12.2024} and \eqref{eq3: 25.11.2024} yields inequality \eqref{eq1b: 20.12.2024}.
Furthermore, by \eqref{eq3: 25.11.2024}, inequalities \eqref{eq1a: 20.12.2024} and \eqref{eq1b: 20.12.2024} are equivalent.
\end{itemize}
\end{proof}

\begin{remark}[Specialization of Proposition~\ref{proposition: 20.12.2024}]
{\em Proposition~\ref{proposition: 20.12.2024} can be specialized to Proposition~\ref{proposition: 08.09.2024}
by setting $\Gr{T} = \CoG{t}$ (a clique of order $t$), for which every induced subgraph of $\Gr{T}$ on $s$
vertices is a clique $\Gr{S}$ of order $s$ ($\Gr{S} = \CoG{s}$). In that case, $\autcount{\Gr{T}} = t!$ and $\autcount{\Gr{S}} = s!$.
Consequently, the maximization on the right-hand side of \eqref{eq1b: 20.12.2024} is performed over the single graph $\CoG{s}$,
which gives
\begin{align}
\label{eq13: 25.11.2024}
\injcount{\CoG{t}}{\Gr{G}}^s \leq \injcount{\CoG{s}}{\Gr{G}}^t, \quad 1 \leq s<t<n.
\end{align}
By \eqref{eq3: 25.11.2024}, we have
\begin{align}
\label{eq14a: 25.11.2024}
\injcount{\CoG{t}}{\Gr{G}} = t! \, m_t,  \\
\label{eq14b: 25.11.2024}
\injcount{\CoG{s}}{\Gr{G}} = s! \, m_s,
\end{align}
where $m_t$ and $m_s$ denote, respectively, the number of cliques of orders $t$ and $s$ in $\Gr{G}$.
Combining \eqref{eq13: 25.11.2024}, \eqref{eq14a: 25.11.2024}, and \eqref{eq14b: 25.11.2024} then gives
\begin{align}
\label{eq15: 25.11.2024}
(t! \, m_t)^s \leq (s! \, m_s)^t,   \quad 1 \leq s<t<n.
\end{align}
This reproduces Proposition~\ref{proposition: 08.09.2024}, which establishes a relationship between
the numbers of cliques of two different orders in a finite, simple, undirected graph $\Gr{G}$.}
\end{remark}

\subsection{An entropy-based proof for bounding the number of graph homomorphisms}
\label{subsection: an entropy-based proof for bounding the number of graph homomorphisms}

Perfect graphs are characterized by the property that, for each of their induced subgraphs, the
chromatic number and clique number coincide. The complement of a
perfect graph is perfect as well. Perfect graphs include many important families of graphs such
as bipartite graphs, line graphs of bipartite graphs, chordal graphs, comparability graphs, and
the complements of all these graphs \cite{Lovasz83,Trotignon15}.

A complete bipartite graph, denoted by $\CoBG{s}{t}$ for $s,t \in \naturals$, consists of two
partite sets of sizes $s$ and $t$, where every vertex in one partite set is adjacent to all the
vertices in the other partite set. It is in particular a perfect graph.

In the following, we rely on Proposition~\ref{proposition: UB on the number of homomorphisms}
to derive an upper bound on the number of homomorphisms from a perfect graph to a graph.
We then rely on properties of Shannon entropy to derive a lower bound on the number of homomorphisms
from any complete bipartite graph to any bipartite graph, and examine its tightness by
comparing it to the specialized upper bound that is based on Proposition~\ref{proposition: UB on the number of homomorphisms}.
\begin{proposition}[Number of graph homomorphisms]
\label{proposition: bounds on the number of homomorphisms}
{\em Let $\Gr{T}$ and $\Gr{G}$ be simple, finite, and undirected graphs with no isolated vertices, and
suppose that $\Gr{T}$ is also perfect. Then, the number of homomorphisms from $\Gr{T}$ to $\Gr{G}$
satisfies
\begin{align}
\label{eq4: 16.09.2024}
\homcount{\Gr{T}}{\Gr{G}} \leq \bigl( 2 \, \card{\hspace*{-0.05cm} \E{\Gr{G}}} \bigr)^{\vartheta(\Gr{T})},
\end{align}
where $\vartheta(\Gr{T})$ denotes the Lov\'{a}sz $\vartheta$-function of the graph $\Gr{T}$ \cite{Lovasz79_IT}.

Furthermore, let $\Gr{G}$ be a simple bipartite graph with no isolated vertices, whose partite sets have sizes
$n_1$ and $n_2$, and suppose that the number of edges in $\Gr{G}$ is equal to $\alpha n_1 n_2$ for some $\alpha \in (0,1]$.
Then, for all positive integers $s, t \in \naturals$, the number of homomorphisms from the complete bipartite
graph $\CoBG{s}{t}$ to $\Gr{G}$ satisfies
\begin{align}
\label{eq4b: 16.09.2024}
\alpha^{st} \, \min\{n_1, n_2\}^{-|s-t|} \, (n_1 n_2)^{\max\{s,t\}} \leq \homcount{\CoBG{s}{t}}{\Gr{G}}
\leq \bigl( 2 \alpha n_1 n_2 \bigr)^{\max\{s,t\}}.
\end{align}}
\end{proposition}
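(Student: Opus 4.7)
The plan is to prove \eqref{eq4: 16.09.2024} as a direct corollary of Proposition~\ref{proposition: UB on the number of homomorphisms} combined with a well-known identity of graph invariants for perfect graphs, to specialize this to obtain the upper bound in \eqref{eq4b: 16.09.2024}, and to establish the lower bound in \eqref{eq4b: 16.09.2024} via an orientation analysis followed by two successive applications of Jensen's inequality.

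For \eqref{eq4: 16.09.2024}, Proposition~\ref{proposition: UB on the number of homomorphisms} gives the upper bound $\homcount{\Gr{T}}{\Gr{G}} \leq \bigl( 2 \, \bigcard{\E{\Gr{G}}} \bigr)^{\findnum{\Gr{T}}}$. For a perfect graph $\Gr{T}$, the identity
\begin{align*}
\findnum{\Gr{T}} = \indnum{\Gr{T}} = \vartheta(\Gr{T})
\end{align*}
holds. The first equality follows from the LP theory of perfect graphs: the clique-constrained LP of Definition~\ref{definition: fractional independence number} attains an integral optimum precisely when $\Gr{T}$ is perfect. The second equality follows by combining Lov\'{a}sz's sandwich inequality $\indnum{\Gr{T}} \leq \vartheta(\Gr{T}) \leq \chrnum{\CGr{\Gr{T}}}$ with the weak perfect graph theorem, which gives $\chrnum{\CGr{\Gr{T}}} = \clnum{\CGr{\Gr{T}}} = \indnum{\Gr{T}}$ since $\CGr{\Gr{T}}$ is also perfect. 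Substituting $\vartheta(\Gr{T})$ for $\findnum{\Gr{T}}$ yields \eqref{eq4: 16.09.2024}. The upper bound in \eqref{eq4b: 16.09.2024} then follows immediately: $\CoBG{s}{t}$ is bipartite and hence perfect, with $\vartheta(\CoBG{s}{t}) = \indnum{\CoBG{s}{t}} = \max\{s,t\}$ (a largest independent set being the larger partite class), while $\bigcard{\E{\Gr{G}}} = \alpha n_1 n_2$.

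For the lower bound in \eqref{eq4b: 16.09.2024}, let $X$ and $Y$ denote the partite sets of $\Gr{G}$, of sizes $n_1$ and $n_2$. Since $\CoBG{s}{t}$ is connected bipartite and $\Gr{G}$ is bipartite, any homomorphism $\CoBG{s}{t} \to \Gr{G}$ must map each partite class of $\CoBG{s}{t}$ entirely into one of $X$ or $Y$. The homomorphisms therefore split into two disjoint classes, whose cardinalities I denote by $N_{XY}$ and $N_{YX}$, according to whether the $s$-class of $\CoBG{s}{t}$ is mapped into $X$ or into $Y$. Writing
\begin{align*}
N_{XY} = \sum_{(u_1,\ldots,u_s) \in X^s} \bigl| N(u_1) \cap \cdots \cap N(u_s) \bigr|^t,
\end{align*}
I would apply Jensen's inequality to the convex function $z \mapsto z^t$ over the $n_1^s$ summands; rewrite the inner sum $\sum_u |N(u_1) \cap \cdots \cap N(u_s)|$ by swapping the order of summation as $\sum_{v \in Y} \deg(v)^s$; and then apply Jensen a second time to $z \mapsto z^s$, using $\sum_{v \in Y} \deg(v) = \bigcard{\E{\Gr{G}}} = \alpha n_1 n_2$. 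This produces $N_{XY} \geq \alpha^{st}\, n_1^s n_2^t$, and the fully symmetric argument yields $N_{YX} \geq \alpha^{st}\, n_1^t n_2^s$. Since $\homcount{\CoBG{s}{t}}{\Gr{G}} \geq \max\{N_{XY}, N_{YX}\}$, the algebraic identity $\max\{n_1^s n_2^t, n_1^t n_2^s\} = \min\{n_1, n_2\}^{-|s-t|}\, (n_1 n_2)^{\max\{s,t\}}$ then concludes the proof. The main obstacle is the careful bookkeeping in this last step: the two Jensen applications must be arranged in the correct order and applied to the right sums, and the asymmetric outcomes reconciled with the symmetric expression appearing in the statement. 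These convexity arguments can equivalently be phrased as maximum-entropy inequalities for product distributions, situating the proof within the Shannon-entropy framework emphasized in this section.
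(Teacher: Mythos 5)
Your proof of the inequality \eqref{eq4: 16.09.2024} and of the upper bound in \eqref{eq4b: 16.09.2024} coincides with the paper's, invoking Proposition~\ref{proposition: UB on the number of homomorphisms} together with the identity $\indnum{\Gr{T}} = \findnum{\Gr{T}} = \vartheta(\Gr{T}) = \chrnum{\CGr{T}}$ for perfect graphs, and then specializing to $\CoBG{s}{t}$ with $\vartheta(\CoBG{s}{t}) = \max\{s,t\}$. For the lower bound in \eqref{eq4b: 16.09.2024}, however, you take a genuinely different route from the paper. You observe that every homomorphism from the connected bipartite graph $\CoBG{s}{t}$ into the bipartite graph $\Gr{G}$ must map the two partite classes into opposite sides of the bipartition of $\Gr{G}$, write $N_{XY} = \sum_{(u_1,\ldots,u_s)\in X^s} \bigl| N(u_1)\cap\cdots\cap N(u_s)\bigr|^t$ explicitly, and apply Jensen's inequality twice (first to $z\mapsto z^t$, then, after summation-swapping to $\sum_{v\in Y} \deg(v)^s$, to $z\mapsto z^s$), obtaining $N_{XY}\geq \alpha^{st} n_1^s n_2^t$; symmetrizing and taking the maximum then yields the stated bound. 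This chain is correct, and I checked the algebra as well as the reconciliation $\max\{n_1^s n_2^t,\,n_1^t n_2^s\} = \min\{n_1,n_2\}^{-|s-t|}(n_1 n_2)^{\max\{s,t\}}$. The paper instead constructs a specific joint distribution on $(U_1,\ldots,U_s,V_1,\ldots,V_t)$ in which $(U,V)$ is uniform on $\E{\Gr{G}}$, the $V_j$'s are conditionally i.i.d.\ given $U$, and the $U_i$'s are conditionally i.i.d.\ given $(V_1,\ldots,V_t)$; it then lower-bounds the entropy of this vector via the chain rule and the uniform bound, maps the support injectively into $\Hom{\CoBG{s}{t}}{\Gr{G}}$, and applies the maximum-entropy theorem. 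Your version is more elementary (pure counting and convexity, no need to specify a probability distribution and verify its marginals) and arguably more transparent as to why the bound holds; the paper's version foregrounds the entropy-and-counting theme of the section and shows that the lower bound arises from a natural information-theoretic construction. As you note, the two are closely linked — Jensen on $z\mapsto z^\ell$ is precisely the maximum-entropy inequality in disguise — so the difference is one of framing rather than of underlying combinatorial content.
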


\begin{proof}
For a perfect graph $\Gr{T}$, we have $\indnum{\Gr{T}} = \vartheta(\Gr{T}) = \findnum{\Gr{T}} = \chrnum{\CGr{T}}$,
which by \eqref{eq1: 11.09.2024} gives \eqref{eq4: 16.09.2024}.

We next prove the rightmost inequality in \eqref{eq4b: 16.09.2024}.
Every bipartite graph is a perfect graph, so it follows that
$\indnum{\CoBG{s}{t}} = \vartheta(\CoBG{s}{t}) = \findnum{\CoBG{s}{t}} = \chrnum{\overline{\CoBG{s}{t}}}$.
The independence number of the complete bipartite graph $\CoBG{s}{t}$ is the size of the largest among
its two partite vertex sets, so $\indnum{\CoBG{s}{t}} = \max\{s,t\}$.
Similarly, the complement graph $\overline{\CoBG{s}{t}} = \CoG{s} \cup \CoG{t}$ is the disjoint union of the
complete graphs $\CoG{s}$ and $\CoG{t}$, so its chromatic number is given by
$\chrnum{\overline{\CoBG{s}{t}}} = \max\{s,t\}$. Consequently, $\vartheta(\CoBG{s}{t}) = \max\{s,t\}$, whose
substitution into the right-hand side of \eqref{eq4: 16.09.2024}, along with
$\card{\hspace*{-0.05cm} \E{\Gr{G}}} = \alpha n_1 n_2$ where $\alpha \in (0,1]$ (by assumption), gives the
rightmost inequality in~\eqref{eq4b: 16.09.2024}.

We finally prove the leftmost inequality in \eqref{eq4b: 16.09.2024}.
Let $\set{U}$ and $\set{V}$ denote the partite vertex sets of the simple bipartite graph $\Gr{G}$, where
$\card{\set{U}} = n_1$ and $\card{\set{V}} = n_2$.
Let $(U,V)$ be a random vector taking values in $\set{U} \times \set{V}$, and suppose that $\{U,V\}$ is
distributed uniformly at random on the edges of $\Gr{G}$. Then, the joint entropy of $(U,V)$ is given by
\begin{align}
\Ent{U,V} &= \log \, \bigl| \E{\Gr{G}} \bigr| \nonumber \\
&=\log(\alpha n_1 n_2).  \label{eq: Ent U,V}
\end{align}
The random vector $(U,V)$ can be sampled by first sampling $U=u$ from the marginal probability mass function (PMF)
of $U$, denoted by $\pmfOf{U}$, and then sampling $V$ from the conditional PMF $\CondpmfOf{V}{U}(\cdot|u)$.
We next construct a random vector $(U_1, \ldots, U_s, V_1, \ldots, V_t)$ as follows:
\begin{itemize}
\item Let $V_1, \ldots, V_t$ be conditionally independent and identically distributed (i.i.d.) given $U$, having the
conditional PMF
\begin{align}
\CondpmfOf{V_1, \ldots, V_t}{U}(v_1, \ldots, v_t|u) = \prod_{j=1}^t \CondpmfOf{V}{U}(v_j|u), \quad \forall \, u \in \set{U},
\; \; (v_1, \ldots, v_t) \in \set{V}^{\, t}.   \label{eq1: cond. PMF}
\end{align}
\item Let $U_1, \ldots, U_s$ be conditionally i.i.d. given $(V_1, \ldots, V_t)$, having the conditional PMF
\begin{align}
& \hspace*{-0.3cm} \CondpmfOf{U_1, \ldots, U_s}{V_1, \ldots, V_t}(u_1, \ldots, u_s|v_1, \ldots, v_t) \nonumber \\
\label{eq2a: cond. PMF}
&= \prod_{i=1}^s
\CondpmfOf{U_i}{V_1, \ldots, V_t}(u_i|v_1, \ldots, v_t), \quad \forall \, (u_1, \ldots, u_s) \in \set{U}^{\, s}, \; \;
(v_1, \ldots, v_t) \in \set{V}^{\, t},
\end{align}
where the conditional PMFs on the right-hand side of \eqref{eq2a: cond. PMF} are given by
\begin{align}
& \hspace*{-0.3cm} \CondpmfOf{U_i}{V_1, \ldots, V_t}(u|v_1, \ldots, v_t) \nonumber \\
&= \frac{\pmfOf{U}(u) \,
\overset{t}{\underset{j=1}{\prod}} \CondpmfOf{V}{U}(v_j|u)}{\underset{u' \in \set{U}}{\sum} \biggl\{
\pmfOf{U}(u') \, \overset{t}{\underset{j=1}{\prod}} \CondpmfOf{V}{U}(v_j|u') \biggr\}},
\quad \forall \, u \in \set{U},
\; \; (v_1, \ldots, v_t) \in \set{V}^{\, t}, \; \; i \in \OneTo{s}.  \label{eq2b: cond. PMF}
\end{align}
\end{itemize}
By the construction of the random vector $(U_1, \ldots, U_s, V_1, \ldots, V_t)$ in \eqref{eq1: cond. PMF}--\eqref{eq2b: cond. PMF}, the following holds:
\begin{enumerate}
\item The random variables $U_1, \ldots, U_s$ are identically distributed, and $U_i \sim U$ (i.e., $\pmfOf{U_i} = \pmfOf{U}$) for all $i \in \OneTo{s}$.
Indeed, it first follows from \eqref{eq1: cond. PMF} that
\begin{align}
\pmfOf{V_1, \ldots, V_t}(v_1, \ldots, v_t) = \sum_{u \in \set{U}} \biggl\{ \pmfOf{U}(u) \, \overset{t}{\underset{j=1}{\prod}} \CondpmfOf{V}{U}(v_j|u)
\biggr\} \quad \forall \, (v_1, \ldots, v_t) \in \set{V}^{\, t}.  \label{eq2c: cond. PMF}
\end{align}
Hence, for all $i \in \OneTo{s}$ and $u \in \set{U}$,
\begin{align}
\pmfOf{U_i}(u) &= \sum_{(v_1, \ldots, v_t) \in \set{V}^{\, t}} \biggl\{ \CondpmfOf{U_i}{V_1, \ldots, V_t}(u|v_1, \ldots, v_t)
\, \pmfOf{V_1, \ldots, V_t}(v_1, \ldots, v_t) \biggr\} \label{eq1: 17.3.25} \\
&= \sum_{(v_1, \ldots, v_t) \in \set{V}^{\, t}} \biggl\{ \pmfOf{U}(u) \,
\overset{t}{\underset{j=1}{\prod}} \CondpmfOf{V}{U}(v_j|u) \biggr\} \label{eq2: 17.3.25} \\
&= \pmfOf{U}(u) \, \overset{t}{\underset{j=1}{\prod}} \biggl\{ \sum_{v_j \in \set{V}} \CondpmfOf{V}{U}(v_j|u) \biggr\} \label{eq3: 17.3.25} \\
&= \pmfOf{U}(u), \label{eq4: 17.3.25}
\end{align}
where \eqref{eq1: 17.3.25} holds by Bayes' rule; \eqref{eq2: 17.3.25} holds by combining \eqref{eq2b: cond. PMF} and \eqref{eq2c: cond. PMF};
\eqref{eq3: 17.3.25} holds by expressing the outer $t$-dimensional summation over $\set{V}^{\, t}$ as the product of $t$ inner one-dimensional
summations over $\set{V}$, due to the product form on the right-hand side of \eqref{eq2: 17.3.25}, and finally \eqref{eq4: 17.3.25} holds since
the conditional probability masses in each inner summation on the right-hand side of \eqref{eq3: 17.3.25} add to~1.
\item For all $i \in \OneTo{s}$ and $j \in \OneTo{t}$, we have $(U_i, V_j) \sim (U,V)$, and further $(U_i, V_1, \ldots, V_t) \sim (U, V_1, \ldots, V_t)$.
Indeed, combining \eqref{eq1: cond. PMF}, \eqref{eq2b: cond. PMF}, and \eqref{eq2c: cond. PMF} yields (by another application of Bayes' rule)
\begin{align}
\pmfOf{U_i, V_1, \ldots, V_t}(u, v_1, \ldots, v_t) &= \pmfOf{U}(u) \,
\overset{t}{\underset{j=1}{\prod}} \CondpmfOf{V}{U}(v_j|u) \nonumber \\
\label{eq5: 17.3.25}
&= \pmfOf{U, V_1, \ldots, V_t}(u, v_1, \ldots, v_t), \quad \forall \, u \in \set{U}, \; \; (v_1, \ldots, v_t) \in \set{V}^{\, t}, \; \; i \in \OneTo{s}.
\end{align}
Then, a marginalization of \eqref{eq5: 17.3.25} by summing over all $(v_1, \ldots, v_{j-1}, v_{j+1}, \ldots, v_t) \in \set{V}^{\, t-1}$ gives
\begin{align}
\label{eq6: 17.3.25}
\pmfOf{U_i,V_j}(u,v) = \pmfOf{U,V}(u,v), \quad \forall \, i \in \OneTo{s}, \; j \in \OneTo{t}, \;
u \in \set{U}, \; v \in \set{V}.
\end{align}
\end{enumerate}

The joint entropy of the random subvector $(U_1, V_1, \ldots, V_t)$ then satisfies
\begin{align}
\Ent{U_1, V_1, \ldots, V_t} &= \Ent{U_1} + \sum_{j=1}^t \EntCond{V_j}{U_1} \label{eq0: 16.03.2025} \\
&= \Ent{U} + t \EntCond{V}{U} \label{eq0a: 16.03.2025} \\
&= t \Ent{U,V} - (t-1) \Ent{U} \label{eq0b: 16.03.2025} \\
&= t \log(\alpha n_1 n_2) - (t-1) \Ent{U} \label{eq0c: 16.03.2025} \\
&\geq t \log(\alpha n_1 n_2) - (t-1) \log n_1 \label{eq1: 16.03.2025} \\
&= \log(\alpha^t n_1 n_2^t),  \label{eq5: 16.09.2024}
\end{align}
where \eqref{eq0: 16.03.2025} holds by the chain rule of Shannon entropy, since (by construction)
$V_1, \ldots, V_t$ are conditionally independent given $U$ (see \eqref{eq1: cond. PMF}) and also since
$(U_1, V_1, \ldots, V_t) \sim (U, V_1, \ldots, V_t)$ (see \eqref{eq5: 17.3.25} with $i=1$);
\eqref{eq0a: 16.03.2025} relies on \eqref{eq6: 17.3.25};
\eqref{eq0b: 16.03.2025} holds by a second application of the chain rule;
\eqref{eq0c: 16.03.2025} holds by \eqref{eq: Ent U,V}, and finally \eqref{eq1: 16.03.2025} follows from
the uniform bound, which states that if $X$ is a discrete random variable supported on a finite set
$\set{S}$, then $\Ent{X} \leq \log \card{\set{S}}$. In this case,
$\Ent{U} \leq \log \card{\set{U}} = \log n_1$.
Consequently, the joint entropy of the random vector $(U_1, \ldots, U_s, V_1, \ldots, V_t)$ satisfies
\begin{align}
\Ent{U_1, \ldots, U_s, V_1, \ldots, V_t}
&= \Ent{V_1, \ldots, V_t} + \sum_{i=1}^s \EntCond{U_i}{V_1, \ldots, V_t} \label{eq2a: 16.03.2025} \\
&= \Ent{V_1, \ldots, V_t} + s \EntCond{U_1}{V_1, \ldots, V_t} \label{eq2b: 16.03.2025} \\[0.1cm]
&= s \bigl[ \Ent{V_1, \ldots, V_t} + \EntCond{U_1}{V_1, \ldots, V_t} \bigr] - (s-1) \Ent{V_1, \ldots, V_t} \nonumber \\[0.1cm]
&= s \Ent{U_1, V_1, \ldots, V_t} - (s-1) \Ent{V_1, \ldots, V_t} \label{eq2c: 16.03.2025} \\
&\geq s \log(\alpha^t n_1 n_2^t) - (s-1) \Ent{V_1, \ldots, V_t} \label{eq:13.03.2025}  \\
&\geq s \log(\alpha^t n_1 n_2^t) - (s-1) \log(n_2^t) \label{eq3: 16.03.2025} \\
&= \log(\alpha^{st} n_1^s n_2^t),  \label{eq6: 16.09.2024}
\end{align}
where \eqref{eq2a: 16.03.2025} holds by the chain rule and since (by construction) the random variables
$U_1, \ldots, U_s$ are conditionally independent given $V_1, \ldots, V_t$ (see \eqref{eq2a: cond. PMF});
\eqref{eq2b: 16.03.2025} holds since, by construction, all the $U_i$'s ($i \in \OneTo{s}$)
are identically conditionally distributed given $(V_1, \ldots, V_t)$ (see \eqref{eq2b: cond. PMF});
\eqref{eq2c: 16.03.2025} holds by another use of the chain rule; \eqref{eq:13.03.2025} holds by
\eqref{eq5: 16.09.2024}, and finally \eqref{eq3: 16.03.2025} holds by the uniform bound which implies
in this case that $\Ent{V_1, \ldots, V_t} \leq \log(\card{\set{V}}^{\, t}) = \log(n_2^t)$.

Each vector $(U_1, \ldots, U_s, V_1, \ldots, V_t)$ can be mapped to a homomorphism from $\CoBG{s}{t}$ to $\Gr{G}$
via an injective mapping. To that end, label the vertices of the complete bipartite graph $\CoBG{s}{t}$ by
the elements of $\OneTo{s+t}$, assigning the labels $1, \ldots, s$ to the vertices in the partite set of size
$s$, and the labels $s+1, \ldots, s+t$ to the vertices in the partite set of size $t$.
Then, for all $i \in \OneTo{s}$ and $j \in \OneTo{t}$, map each edge $\{i, i+j\} \in \E{\CoBG{s}{t}}$
to $\{U_i, V_j\} \in \E{\Gr{G}}$. This defines a homomorphism $\CoBG{s}{t} \to \Gr{G}$ since
$\{U_i, V_j\} \in \E{\Gr{G}}$ holds by construction; see \eqref{eq2b: cond. PMF}. Recall that in \eqref{eq2b: cond. PMF},
$\{U,V\}$ is uniformly distributed over the edges of the graph $\Gr{G}$, where $U \in \set{U}$ and $V \in \set{V}$
(by construction), $\pmfOf{U}$ denotes the marginal PMF of $U$, and $\CondpmfOf{V}{U}$ denotes the conditional PMF
of $V$ given $U$. The suggested mapping is injective since it maps distinct such vectors to distinct homomorphisms in
$\Hom{\CoBG{s}{t}}{\Gr{G}}$. By \eqref{eq: number of homomorphisms} and the uniform bound, it then follows that
\begin{align}
\label{eq7: 16.09.2024}
\Ent{U_1, \ldots, U_s, V_1, \ldots, V_t} \leq \log \bigl(\homcount{\CoBG{s}{t}}{\Gr{G}}\bigr).
\end{align}
Combining \eqref{eq6: 16.09.2024} and \eqref{eq7: 16.09.2024} yields
\begin{align}
\label{eq8: 16.09.2024}
\homcount{\CoBG{s}{t}}{\Gr{G}} \geq \alpha^{st} n_1^s n_2^t.
\end{align}
The right-hand side of \eqref{eq8: 16.09.2024} is not necessarily symmetric in $n_1$ and $n_2$ (or in $s,t \in \naturals$). Consequently,
swapping either $n_1$ and $n_2$ (or $s$ and $t$) gives
\begin{align}
\label{eq9: 16.09.2024}
\homcount{\CoBG{s}{t}}{\Gr{G}} & \geq \max \Bigl\{ \alpha^{st} n_1^s n_2^t, \;  \alpha^{st} n_1^t n_2^s \Bigr\} \nonumber \\
&= \alpha^{st} \, \min\{n_1, n_2\}^{\min\{s,t\}} \, \max\{n_1, n_2\}^{\max\{s,t\}} \nonumber \\
&= \alpha^{st} \, \min\{n_1, n_2\}^{\min\{s,t\}-\max\{s,t\}} \, (n_1 n_2)^{\max\{s,t\}} \nonumber \\
&= \alpha^{st} \, \min\{n_1, n_2\}^{-|s-t|} \, (n_1 n_2)^{\max\{s,t\}},
\end{align}
which proves the leftmost inequality in \eqref{eq4b: 16.09.2024}.
\end{proof}

Setting $s=t$ in Proposition~\ref{proposition: bounds on the number of homomorphisms} gives
the following.
\begin{corollary}
\label{corollary: bounds on the number of homomorphisms}
{\em Let $\Gr{G}$ be a simple bipartite graph with partite sets of sizes $n_1$ and $n_2$, no isolated vertices,
and $\alpha n_1 n_2$ edges for some $\alpha \in (0,1]$. Then, for all $s \in \naturals$,
\begin{align}
\label{eq4c: 16.09.2024}
\alpha^{s^2} (n_1 n_2)^s \leq \homcount{\CoBG{s}{s}}{\Gr{G}} \leq (2 \alpha)^s (n_1 n_2)^s.
\end{align}
Consequently, for a fixed $\alpha \in (0,1]$, the number of homomorphisms from the complete bipartite graph
$\CoBG{s}{s}$ to $\Gr{G}$ scales like
$(n_1 n_2)^s$.}
\end{corollary}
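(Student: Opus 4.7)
The plan is to obtain Corollary~\ref{corollary: bounds on the number of homomorphisms} as an immediate specialization of Proposition~\ref{proposition: bounds on the number of homomorphisms} with $t=s$, without any additional work beyond simplifying the two endpoints of \eqref{eq4b: 16.09.2024}.

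First, I would invoke Proposition~\ref{proposition: bounds on the number of homomorphisms} with $\Gr{T} = \CoBG{s}{t}$ for $t = s$. Under this substitution, one has $|s-t| = 0$ and $\max\{s,t\} = \min\{s,t\} = s$, so the factor $\min\{n_1,n_2\}^{-|s-t|}$ in the leftmost expression of \eqref{eq4b: 16.09.2024} collapses to~$1$ and the exponent $\max\{s,t\}$ in both the leftmost and rightmost expressions becomes simply $s$. Therefore the leftmost bound in \eqref{eq4b: 16.09.2024} reads
\begin{align*}
\alpha^{s^2}\,(n_1 n_2)^s \leq \homcount{\CoBG{s}{s}}{\Gr{G}},
\end{align*}
and the rightmost bound reads
\begin{align*}
\homcount{\CoBG{s}{s}}{\Gr{G}} \leq \bigl(2\alpha n_1 n_2\bigr)^s = (2\alpha)^s\,(n_1 n_2)^s,
\end{align*}
which together give \eqref{eq4c: 16.09.2024}.

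For the scaling statement, I would note that, for a fixed $\alpha \in (0,1]$, the multiplicative constants $\alpha^{s^2}$ and $(2\alpha)^s$ in \eqref{eq4c: 16.09.2024} depend only on $s$ and $\alpha$, and not on $n_1, n_2$. Hence, letting $n_1, n_2$ grow while $s$ and $\alpha$ remain fixed, both the upper and lower bounds on $\homcount{\CoBG{s}{s}}{\Gr{G}}$ scale as $(n_1 n_2)^s$, which establishes the claimed scaling. There is essentially no obstacle in this argument: the entire content of the corollary is already inside Proposition~\ref{proposition: bounds on the number of homomorphisms}, and the only thing to verify is the cosmetic simplification of $\max\{s,t\}$ and $|s-t|$ at $t=s$.
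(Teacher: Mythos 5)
Your proposal is correct and matches the paper exactly: the corollary is obtained by setting $t=s$ in Proposition~\ref{proposition: bounds on the number of homomorphisms} and simplifying, which collapses $\min\{n_1,n_2\}^{-|s-t|}$ to $1$ and replaces $\max\{s,t\}$ by $s$, yielding \eqref{eq4c: 16.09.2024}. The scaling remark follows for fixed $\alpha$ and $s$ just as you argue.
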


\begin{remark}[Comparison to Sidorenko's lower bound]
\label{remark: Comparison to Sidorenko's LB}
{\em A graph $\Gr{H}$ is said to be Sidorenko if it has the property that for every graph $\Gr{G}$
\begin{align}
\label{def: Sidorenko graph}
\frac{\homcount{\Gr{H}}{\Gr{G}}}{\bigcard{\V{\Gr{G}}}^{\; \card{\V{\Gr{H}}}}} \geq
\biggl( \frac{2 \, \card{\E{\Gr{G}}}}{\card{\V{\Gr{G}}}^{\, 2}} \biggr)^{\, \card{\E{\Gr{H}}}}.
\end{align}
Sidorenko's conjecture states that every bipartite graph is Sidorenko (introduced by Sidorenko \cite{Sidorenko93}
and by Erd\v{o}s-Simonovits \cite{Simonovits84}). Although this conjecture remains an open problem
in its full generality, it is known that every bipartite graph containing a vertex adjacent
to all vertices in its other part is Sidorenko (see, e.g., \cite[Theorem~5.5.14]{Zhao23}, originally proved in
\cite{ConlonFS10}, and simplified in \cite{ConlonFS10b}). In particular, every complete bipartite graph is Sidorenko (see \cite[Theorem~5.5.12]{Zhao23}).
Specializing \eqref{def: Sidorenko graph} to a complete bipartite graph $\Gr{H} = \CoBG{s}{t}$, where $s,t \in \naturals$,
yields the following inequality for every graph $\Gr{G}$:
\begin{align}
\label{eq1: 21.05.2025}
\frac{\homcount{\CoBG{s}{t}}{\Gr{G}}}{\bigcard{\V{\Gr{G}}}^{s+t}} \geq
\biggl( \frac{2 \, \card{\E{\Gr{G}}}}{\card{\V{\Gr{G}}}^{\, 2}} \biggr)^{st}.
\end{align}
Let us now further specialize inequality \eqref{eq1: 21.05.2025} to the case where $\Gr{G}$ is a simple bipartite graph with
partite sets of sizes $n_1$ and $n_2$, has no isolated vertices, and contains $\alpha n_1 n_2$ edges for some $\alpha \in (0,1]$.
In this setting, straightforward algebra gives
\begin{align}
\label{eq2: 21.05.2025}
\homcount{\CoBG{s}{t}}{\Gr{G}} \geq (2 \alpha)^{st} (n_1+n_2)^{s+t-2st} (n_1 n_2)^{st} \triangleq \mathrm{LB}_1.
\end{align}
This lower bound on $\homcount{\CoBG{s}{t}}{\Gr{G}}$ is compared next to the bound
\begin{align}
\label{eq3: 21.05.2025}
\homcount{\CoBG{s}{t}}{\Gr{G}} \geq \alpha^{st} \, \min\{n_1, n_2\}^{-|s-t|} \, (n_1 n_2)^{\max\{s,t\}} \triangleq \mathrm{LB}_2,
\end{align}
which appears as the leftmost inequality in \eqref{eq4b: 16.09.2024}.
To compare these two bounds, we examine the ratio $\frac{\mathrm{LB}_2}{\mathrm{LB}_1}$.
Combining \eqref{eq2: 21.05.2025} and \eqref{eq3: 21.05.2025} gives
\begin{align}
\label{eq4: 21.05.2025}
\frac{\mathrm{LB}_2}{\mathrm{LB}_1} = 2^{-st} \, (n_1+n_2)^{2st-(s+t)} \; (n_1 n_2)^{\, \max\{s,t\}-st} \; \min\{n_1, n_2\}^{-|s-t|}.
\end{align}
The right-hand side of \eqref{eq4: 21.05.2025} is symmetric in $n_1$ and $n_2$, and also in $s$ and $t$.
Without loss of generality, suppose that $s \geq t$, and let $\delta \triangleq \frac{\max\{n_1,n_2\}}{\min\{n_1,n_2\}} \geq 1$.
By \eqref{eq4: 21.05.2025}, it can be verified to give
\begin{align}
\label{eq5: 21.05.2025}
\frac{\mathrm{LB}_2}{\mathrm{LB}_1} &= 2^{-s} \, \Biggl( \frac{(1+\delta)^2}{2 \delta} \Biggr)^{s(t-1)} \; (1+\delta)^{s-t} \\
&\geq 2^{-s} \, 2^{s(t-1)} (1+\delta)^{s-t} \nonumber \\
&\geq 2^{s(t-2)} \, 2^{s-t} \nonumber \\
\label{eq6: 21.05.2025}
&= 2^{st-(s+t)}.
\end{align}
Since the right-hand side of \eqref{eq6: 21.05.2025} is symmetric in $s$ and $t$, the earlier assumption that $s \geq t$ can be dropped. Consequently,
\begin{enumerate}[(1)]
\item If $s=1$ or $t=1$ (i.e., $\CoBG{s}{t}$ is a star graph), then it follows from \eqref{eq6: 21.05.2025} that $\mathrm{LB}_2 \geq \tfrac12 \, \mathrm{LB}_1$;
\item If $s=t=2$, then $\mathrm{LB}_2 \geq \mathrm{LB}_1$; if also $\delta>1$ (i.e., $n_1 \neq n_2$), then \eqref{eq5: 21.05.2025}
yields $\mathrm{LB}_2 > \mathrm{LB}_1$;
\item Otherwise (i.e., if $s,t \geq 2$ and $\max\{s,t\} \geq 3$), we have $st-(s+t) \geq 1$, and therefore $\mathrm{LB}_2 \geq 2^{st-(s+t)} \, \mathrm{LB}_1
\geq 2 \, \mathrm{LB}_1$.
\end{enumerate}
To conclude, except for the case where $\CoBG{s}{t}$ is a star graph, our lower bound on $\homcount{\CoBG{s}{t}}{\Gr{G}}$ in \eqref{eq3: 21.05.2025}
compares favourably to Sidorenko's lower bound in \eqref{eq2: 21.05.2025}.
}
\end{remark}

\section*{Acknowledgments}
The author acknowledges the timely and helpful reports of the two referees, and a
stimulating discussion with Yuval Peled during the author's seminar talk on the subject at
the Einstein Institute of Mathematics, Hebrew University of Jerusalem.
The author also appreciates the hospitality provided during the seminar, which was organized by Yuval.

\end{document}